\newcommand{\x}{\bm{x}}
\newcommand{\g}{\bm{g}}
\newcommand{\z}{\bm{z}}
\newcommand{\Rbb}{\mathbb{R}}
\newcommand{\sumj}{{\textstyle \sum_j}}
\newcommand{\argmax}{\mathop{\rm arg~max}\limits}
\newcommand{\argmin}{\mathop{\rm arg~min}\limits}
\newtheorem{PROP}{Proposition}
\newtheorem{ALGO}{Algorithm}
\newtheorem{DEFI}{Definition}
\newtheorem{LEMM}{Lemma}
\newtheorem{ASSU}{Assumption}
\begin{document}
\title{A New Dynamic Pricing Method Based on \\Convex Hull Pricing}

\author{
    Naoki~Ito,~\IEEEmembership{Non~Member,~IEEE,}
    Akiko~Takeda,~\IEEEmembership{Non~Member,~IEEE}
    and~Toru~Namerikawa,~\IEEEmembership{Member,~IEEE}% <-this % stops a space
\thanks{N. Ito is with the School of Open and Environment System, Keio University, 3-14-1 Hiyoshi, Kohoku-ku, Yokohama, Kanagawa, 223-8522, Japan (e-mail: pico@a8.keio.jp).}% <-this % stops a space
\thanks{A. Takeda is with the Department of Mathematical Informatics, The University of Tokyo, 7-3-1 Hongo, Bunkyo-ku, Tokyo, 113-8656, Japan (email: takeda@mist.i.u-tokyo.ac.jp).}% <-this % stops a space
\thanks{T. Namerikawa is with the Department of System Design Engineering, Keio University, 3-14-1 Hiyoshi, Kohoku-ku, Yokohama, Kanagawa, 223-8522, Japan (email: namerikawa@sd.keio.ac.jp).}% <-this % stops a space
\thanks{The authors are with JST, CREST, 4-1-8 Honcho, Kawaguchi, Saitama, 332-0012, Japan.}
\thanks{The earlier version \cite{Ito2013} of this paper was presented at the 2013 IEEE SmartGridComm.}
}
%\markboth{IEEE TRANSACTIONS ON SMART GRID,~VOL.~XX, No.~XX, XXXX~2014}%
\markboth{}%
{Shell \MakeLowercase{\textit{et al.}}: Bare Demo of IEEEtran.cls for Journals}

\maketitle
\begin{abstract}
This paper presents a new dynamic pricing model (a.k.a. real-time pricing) that 
reflects startup costs of generators. 
Dynamic pricing, which is a method to control demand by pricing electricity at hourly 
(or more often) intervals, has been studied by many researchers. 
They assume that the cost functions of suppliers are convex, 
although they may be nonconvex because of the startup costs of generators in practice. 
We provide a dynamic pricing model that takes into account such cost functions 
within the settings of unit commitment problems (UCPs). 
Our model gives convex hull price (CHP), 
which has not been used in the context of dynamic pricing, 
though it is known that the CHP minimizes the uplift payment 
which is disadvantageous to suppliers for a given demand. 
In addition, we apply an iterative algorithm based on the subgradient method 
to solve our model. Numerical experiments show the efficiency of our model 
on reducing uplift payments. The prices determined 
by our algorithm give sufficiently small uplift payments 
in a realistic computational time. 
\end{abstract}

\begin{IEEEkeywords}
Convex hull pricing, unit commitment problem, uplift payments, 
dynamic pricing, electricity market, subgradient method.
\end{IEEEkeywords}

\IEEEpeerreviewmaketitle

\section{Introduction}
  \IEEEPARstart{D}{ynamic} pricing (a.k.a. real-time pricing) is 
  a method of invoking a response in demand 
  by pricing electricity at hourly (or more often) intervals. 
  There are many studies about dynamic pricing. 
  For example, Roozbehani et al. \cite{Roozbehani2011} 
  proposed a nonlinear control model in a real-time market, 
  in which prices are updated on moment-to-moment basis. 
  They focused on the stability of the market, 
  and analyzed stabilizing effects of their model 
  by using volatility measures of the prices.
  On the other hand, Miyano and Namerikawa \cite{Miyano2012a} proposed 
  a dynamic pricing model in a day-ahead market, in which a market operator 
  sets next day's hourly (or more often) prices. 
  Their price is given by the Lagrange multiplier of a social welfare maximization problem.
  They studied an algorithm based on a steepest descent method 
  to control the load levels, and showed its convergence. 
  Their numerical results show the efficiency of their algorithm.

  These studies assume that the cost functions of suppliers are convex, 
  although they may be nonconvex because of the startup costs of generators 
  in practice (in fact, there are many studies that deal such cost functions 
  within the settings of unit commitment problems (UCPs)). Thus, these models 
  do not fully reflect the startup costs to the prices, 
  and would be disadvantageous to the suppliers. 
  One of the measures showing the disadvantages is the {\em uplift payment} which is 
  the gap between the suppliers' optimal profit and actual profit. 
  If the cost functions are truly convex, a marginal cost price can make the uplift payments zero. 
  However, if not, none of pricing models may make the uplift payments zero. 
  The uplift payments for owners of many generators tend to be relatively small 
  and can often be ignored. However, this may not be true for small producers 
  since startup costs occupy a large portion of the total cost of electricity generation. 

  Recently, under the assumption that the demand is given 
  (i.e., out of the context of the dynamic pricing), 
  several pricing models \cite{Hogan2003,Gribik2007,Bjorndal2008,Zhang2009,O'Neill2005} 
  have been proposed in order to reduce the uplift payments. The most successful pricing model 
  is the convex hull pricing (CHP) model (a.k.a. extended locational marginal pricing model) 
  proposed by Gribik et al. \cite{Gribik2007}. The authors theoretically showed that the CHP is given 
  by the Lagrange multiplier for the UCP, and the CHP minimizes the uplift payment for a given demand. 
  Many researchers have studied algorithms to calculate the CHP, e.g., 
  \cite{Wang2013c,Wang2010}. However, they have not been used in the context of dynamic pricing.
  
  This paper presents a new dynamic pricing model based on the CHP. 
  First, we formulate a social welfare maximization problem, 
  which maximizes the sum of the consumers' utility and the suppliers' profit 
  under the condition that supply and demand are equal, 
  with nonconvex cost functions of the suppliers within the settings of the UCP. 
  Then we applied a CHP approach, which is invented for the UCP, 
  to the social welfare maximization problem; 
  our model takes into account both the startup costs and dynamic demand. 
  We prove that the CHP is given by a solution of its dual problem, 
  i.e., the Lagrange multiplier for the social welfare maximization problem. 
  This implies that our price minimizes 
  the uplift payment for the equilibrium demand. 
  Since our pricing model has a nonsmooth objective function 
  including 0-1 integer variables, it is difficult to be solved by exact optimization algorithms. 
  Thus we provide an approximate pricing algorithm based on the subgradient method. 
  Numerical results show that our pricing model 
  leads to smaller uplift payments compared with standard dynamic pricing models. 
  Moreover, our algorithm achieves sufficiently small uplift payments 
  in a realistic number of iterations and computational time. 
  
  The rest of this paper is organized as follows. 
  Section \ref{Market Model} presents the setting of a dynamic electricity market. 
  Section \ref{Convex Hull Price} presents the definition of the UCP and the CHP. 
  Section \ref{Model and Algorithm} introduces a new dynamic pricing model 
  based on a social welfare maximization that includes the UCP. 
  In addition, we theoretically show that our pricing model leads to the CHPs. 
  We also provide a pricing algorithm based on the subgradient method. 
  Numerical results are reported in Section \ref{Numerical Simulations}. 
  We give conclusions and list possible directions for research 
  in Section \ref{Conclusion}.

  In what follows, we denote column vectors in boldface, e.g., $\x\in\Rbb^n$ whose $i$-th element is $x_i\in\Rbb~(i=1,2,\dots,n)$.

\section{Market Model}\label{Market Model}
  We will begin by describing the electricity market model 
  and existing pricing models. 
  Assumptions in this section are made 
  along the lines of \cite{Roozbehani2011,Miyano2012a}. 
  There are three kinds of participants in an electricity market: 
  consumers, suppliers, and an independent system operator (ISO). 
  The suppliers (or consumers) decide their electric power production 
  (or consumption) so as to maximize their profit 
  (or utility, respectively) at a given electricity price. 
  The ISO is a non-profit institution that is independent of 
  the suppliers and the consumers. The ISO makes hourly (or more often) pricing 
  decisions to balance supply and demand. 
  %If the supply-demand balance 
  %cannot be satisfied by pricing, the ISO can make suppliers adjust 
  %their generation of electricity. 
  We assume that resistive losses can be ignored. 
  Further, there are no line capacity constraints 
  and reserve capacity constraints. 
  
  \subsection{Supply and demand models}
  To make our model simple, we suppose 
  a single representative supplier (or consumer) whose response represents 
  the macro behavior of all suppliers (or consumers), and focus on a 
  single-period model that deals with each period independently (i.e., 
  a model with that does not take into account dynamical changes). 
  Our model and algorithm can be extended to 
  a multi-agent and multi-period model, as is shown in \cite{Hogan2003}. 
  Let $u:~[0,\infty)\rightarrow[0,\infty)$ be the utility function 
  of the representative consumer, which represents the dollar value of 
  consuming electricity. 
  Let $v:~[0,\infty)\rightarrow[0,\infty]$ be the cost function of 
  the representative supplier, which represents the dollar cost of 
  producing electricity. 
  For a given price $p>0$, the consumer (or supplier) 
  makes their demand $d(p)$ (or supply $s(p)$) 
  to maximize their utility (or profit, respectively), 
  i.e., 
  \begin{align}
    d(p) &\in \argmax_{d\geq 0}~ u(d) - pd,\label{consumer's_problem}\\
    s(p) &\in \argmax_{y\geq 0}~ py - v(y).\label{supplier's_problem}
  \end{align}
  \begin{figure}
    \centering
    \includegraphics[width=.8\columnwidth]{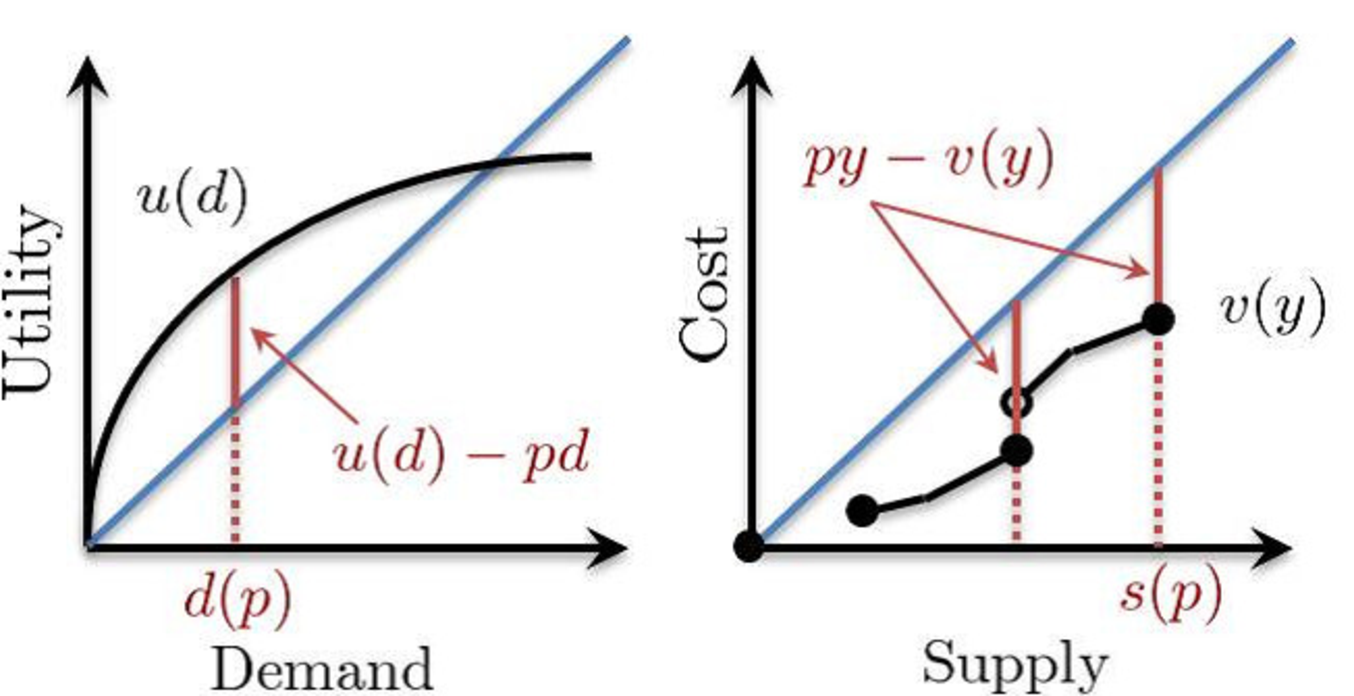}
    \caption{Illustration of supply and demand models. The left panel illustrates 
    the consumer's problem \eqref{consumer's_problem}, and the right panel illustrates 
    the supplier's problem \eqref{supplier's_problem}. 
    The vertical solid lines represent optimal utility and profit for each agent. 
    In this case, the supplier has two optimal solutions. The cost function $v$ 
    in the right panel is nonconvex and discontinuous. 
    We deal with such cost functions in this paper.}\label{sdmodel}
  \end{figure}
  A conceptual illustration is given in Fig. \ref{sdmodel}. 
  The existing works \cite{Roozbehani2011,Miyano2012a} assume 
  the following assumption:
  \begin{ASSU}[\cite{Roozbehani2011,Miyano2012a}]\label{convex_concave}
    The utility function $u$ is class-$\mathcal{C}^2$, monotonically increasing, 
    and strictly concave on $[0,\infty)$. 
    The cost function $v$ is class-$\mathcal{C}^2$, monotonically increasing, 
    and strictly convex on $[0,\infty)$. 
  \end{ASSU}
  Note that if there exist a solution of \eqref{consumer's_problem} (or \eqref{supplier's_problem}), 
  it is unique under Assumption \ref{convex_concave}.
  
  \subsection{Pricing model}
  The ISO is a non-profit institution and independent 
  from the consumer and the supplier. The objective of the ISO is 
  to manage the electricity market, especially, to balance 
  demand and supply. In the electricity market, 
  neither the consumer and the supplier do not bid. 
  Thus, the ISO should match the levels of demand $d(p)$ 
  and supply $s(p)$ by making an appropriate pricing decision $p$. 
  Here, it is natural to assume that the utility function $u$ of 
  the consumer is unknown to the ISO, while the cost function $v$ of 
  the supplier is not necessarily known to the ISO. 
  Accordingly, an electricity price $p$ is determined 
  through the following procedure: 
  \begin{ALGO}\label{pricing_procedure}
    \begin{enumerate}
      \item The ISO sets the initial price $p^0$, 
      and sends it to the consumer and supplier. $k\leftarrow 0$.
      \item The consumer and supplier respectively determine 
      the demand $d(p^k)$ and the supply $s(p^k)$, and send them to the ISO.
      \item If there is a gap between the demand $d(p^k)$ and supply $s(p^k)$, 
      the ISO reassign a new price $p^{k+1}$ to manage the balance of supply and demand, 
      and send it to the consumer and supplier again. $k\leftarrow k+1$.
      \item Step 2 and 3 are repeated.
      %\item The ISO makes the final price decision $p^N$. 
      %If there still remains a gap between the demand $d(p^N)$ and supply $s(p^N)$, 
      %the ISO makes the supplier adjust its supply to $d(p^N)$. 
    \end{enumerate}
  \end{ALGO}
  \begin{figure}[h]
    \centering
    \includegraphics[width=.6\columnwidth]{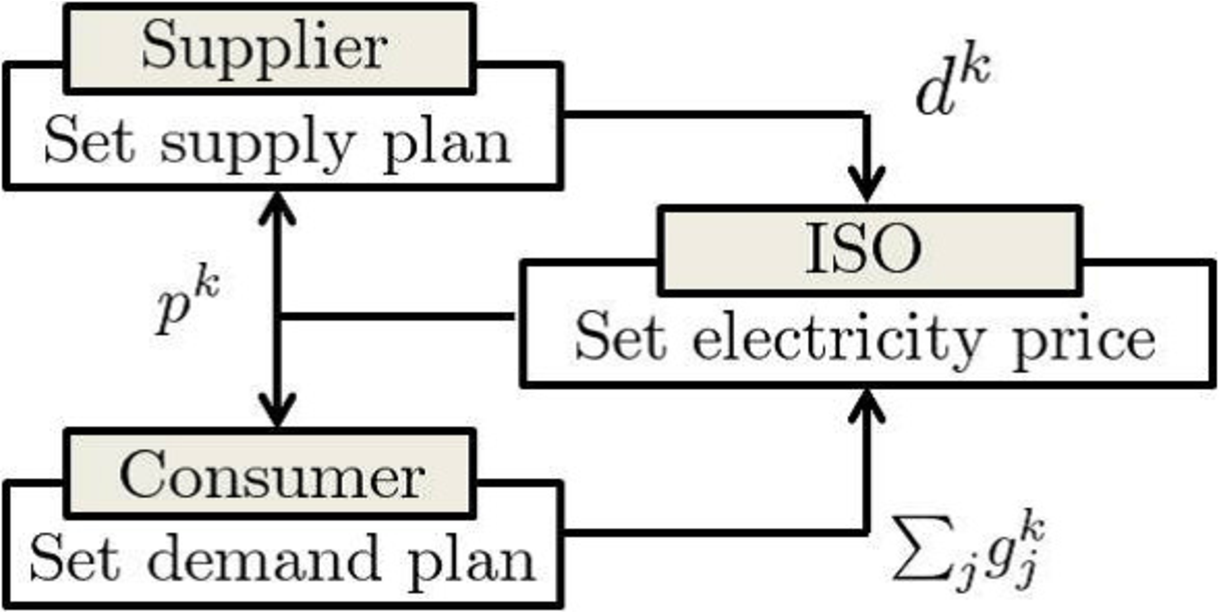}
    \caption{Schematic views of the electricity market model.}
  \end{figure}
  
  Miyano and Namerikawa \cite{Miyano2012a} proposed a pricing model 
  based on social welfare maximization problem 
  with the supply-demand balance constraint. 
  The social welfare is represented as the sum of the consumer's utility 
  and supplier's profit:
  \begin{align}
    \{u(d)-pd\} + \{py - v(y)\}.\label{social_welfare}
  \end{align}
  Since we assumed that the resistive losses can be ignored, 
  the supply-demand balance constraint is $d=y$.
  Consequently, we obtain the following social welfare maximization problem:
  \begin{align}
    \max_{d\geq0,y\geq0} u(d)-v(y) \quad \text{s.t.} \quad d=y.\label{Miyano_max}
  \end{align}
  Or more simply, 
  \begin{align}
    \max_{d\geq0} ~ u(d)-v(d).\label{Miyano_max_simple}
  \end{align}
  However, the ISO cannot solve \eqref{Miyano_max} 
  (or \eqref{Miyano_max_simple}) directly since the 
  utility function $u$ is unknown to the ISO. Therefore, 
  the following partial Lagrangian dual problem of \eqref{Miyano_max} 
  is alternatively considered in \cite{Miyano2012a}: 
  \begin{align}
    \min_{\lambda} ~ \varphi(\lambda),\label{Miyano_dual}
  \end{align}
  where
  \begin{align}
    \varphi(\lambda) &= \max_{d\geq0,y\geq0} u(d) - v(y) + \lambda(y - d)\nonumber\\
    &= \max_{d\geq0}\{u(d) -\lambda d\} + \max_{y\geq0}\{\lambda y - v(y)\}. \label{Miyano_varphi}
  \end{align}
  Since the maximization problems in \eqref{Miyano_varphi} correspond to 
  \eqref{consumer's_problem} and \eqref{supplier's_problem}, 
  the ISO can make the consumer and supplier solve them by 
  sending $\lambda$ as a price. This mechanism allows the ISO to 
  solve \eqref{Miyano_dual} by using the steepest descent method 
  (See \cite{Miyano2012a} for details). 
  The following lemma is well known (e.g. \cite{Miyano2012a,Roozbehani2010a}). 
  \begin{LEMM}\label{optimal_price}
    Suppose that $(d^*,y^*)$ is an optimal solution of \eqref{Miyano_max}, 
    and $\lambda^*$ is an optimal solution of \eqref{Miyano_dual}. 
    If Assumption \ref{convex_concave} is satisfied, $d^*=d(\lambda^*)$ 
    and $y^*=s(\lambda^*)$ hold. 
  \end{LEMM}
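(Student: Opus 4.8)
The statement is a textbook strong-duality/KKT fact for the convex program \eqref{Miyano_max}, so I would prove it in that spirit: first establish that the primal and dual optimal values coincide, then use a short weak-duality computation at the given primal optimum $(d^*,y^*)$ to force the Lagrangian to be maximized coordinatewise at $(d^*,y^*)$ when the multiplier equals $\lambda^*$, and finally invoke uniqueness of those coordinatewise maximizers. For the first part, under Assumption \ref{convex_concave} the objective $u(d)-v(y)$ is concave, the feasible set $\{(d,y)\in\Rbb^2:\, d=y,\ d\geq0,\ y\geq0\}$ is a nonempty polyhedron, and $v$ is finite on $[0,\infty)$ (being $\mathcal{C}^2$ there), so the point $(d,y)=(1,1)$ is feasible and lies in the interior of the domain $[0,\infty)^2$ of the objective. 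Hence a Slater-type constraint qualification holds and strong duality applies: the optimal value of \eqref{Miyano_max} equals that of \eqref{Miyano_dual}, which by optimality of $\lambda^*$ is $\varphi(\lambda^*)$; in particular $\varphi(\lambda^*)$ is finite, so the two suprema defining it in \eqref{Miyano_varphi} are finite as well.

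Next, since $(d^*,y^*)$ is feasible we have $d^*=y^*$, hence $\lambda^*(y^*-d^*)=0$ and
\begin{align}
u(d^*)-v(y^*) &= \{u(d^*)-\lambda^* d^*\} + \{\lambda^* y^* - v(y^*)\} \nonumber\\
&\leq \max_{d\geq0}\{u(d)-\lambda^* d\} + \max_{y\geq0}\{\lambda^* y - v(y)\} = \varphi(\lambda^*).\nonumber
\end{align}
By strong duality the far-left and far-right sides are equal, so both inequalities absorbed into the middle line must be equalities; that is, $u(d^*)-\lambda^* d^*=\max_{d\geq0}\{u(d)-\lambda^* d\}$ and $\lambda^* y^*-v(y^*)=\max_{y\geq0}\{\lambda^* y-v(y)\}$. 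Equivalently, $d^*$ solves the consumer's problem \eqref{consumer's_problem} and $y^*$ solves the supplier's problem \eqref{supplier's_problem}, both at price $p=\lambda^*$.

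Finally, under Assumption \ref{convex_concave} the maps $d\mapsto u(d)-\lambda^* d$ and $y\mapsto \lambda^* y - v(y)$ are strictly concave, so \eqref{consumer's_problem} and \eqref{supplier's_problem} have at most one maximizer each (as already noted after Assumption \ref{convex_concave}). Since $d(\lambda^*)$ and $s(\lambda^*)$ denote those maximizers, we conclude $d^*=d(\lambda^*)$ and $y^*=s(\lambda^*)$.

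The only nonroutine step is the strong-duality claim — more precisely, ruling out a duality gap; this is exactly where the convexity/concavity half of Assumption \ref{convex_concave} is used, together with the smoothness of $v$ (which makes the Slater point admissible), while strict convexity/concavity is needed only at the last step to pass from ``a maximizer'' to ``the maximizer.'' If one prefers to avoid citing a general strong-duality theorem, the same conclusion follows from the KKT conditions for the one-dimensional problem \eqref{Miyano_max_simple}, which are necessary and sufficient here since the problem is convex with a differentiable objective and an affine constraint; the KKT multiplier is precisely an optimal $\lambda^*$, and stationarity is precisely the statement that $d^*$ maximizes $u(d)-\lambda^* d$ and (using $y^*=d^*$) that $y^*$ maximizes $\lambda^* y - v(y)$.
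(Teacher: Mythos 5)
Your proof is correct and complete. Note that the paper itself gives no proof of Lemma \ref{optimal_price}: it is stated as ``well known'' with citations to \cite{Miyano2012a,Roozbehani2010a}, so there is no in-paper argument to compare against. Your route is the standard one for this fact: establish a zero duality gap for the concave program \eqref{Miyano_max} (the affine constraint $d=y$ together with a feasible point such as $(1,1)$ in the interior of $[0,\infty)^2$, where $u$ and $v$ are finite by Assumption \ref{convex_concave}, supplies the needed relative-interior/Slater-type qualification), then squeeze the weak-duality chain at $(d^*,y^*)$ to conclude that $d^*$ and $y^*$ attain the two maxima in \eqref{Miyano_varphi} at $\lambda^*$, and finally use strict concavity of $d\mapsto u(d)-\lambda^* d$ and $y\mapsto \lambda^* y - v(y)$ to identify them with the unique maximizers $d(\lambda^*)$ and $s(\lambda^*)$. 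The one step worth being explicit about, which you handle, is that finiteness of $\varphi(\lambda^*)$ and the equality chain together give attainment of each supremum, so the argmax sets in \eqref{consumer's_problem} and \eqref{supplier's_problem} are nonempty before uniqueness is invoked. The closing KKT remark for \eqref{Miyano_max_simple} is an equally valid, more elementary alternative.
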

  $\lambda^*$ is called the Lagrange multiplier for \eqref{Miyano_max}. 
  Lemma \ref{optimal_price} states that the demand $d(\lambda^*)$ and 
  the supply $s(\lambda^*)$ maximize the social welfare \eqref{social_welfare}, 
  and therefore, the Lagrange multiplier $\lambda^*$ 
  can be regarded as an optimal price. 
  
  These results hold under the assumption that the cost function $v$ is convex. 
  Here, we consider an electricity market model with a nonconvex 
  cost function within the setting of the unit commitment 
  problem (UCP) and provide an efficient pricing model for it. 

\section{Convex hull pricing model}\label{Convex Hull Price}
  In this section, we will introduce 
  the convex hull pricing (CHP) model (a.k.a. extended locational marginal pricing model) 
  proposed by Gribik et al. \cite{Gribik2007}. 
  Note that the models in this section do not take into account dynamic demand; 
  We assume that the demand is given. 
  
  \subsection{Unit commitment problem}
  First, we consider the supplier's cost function. 
  \begin{figure}
    \centering
    \includegraphics[width=.35\columnwidth]{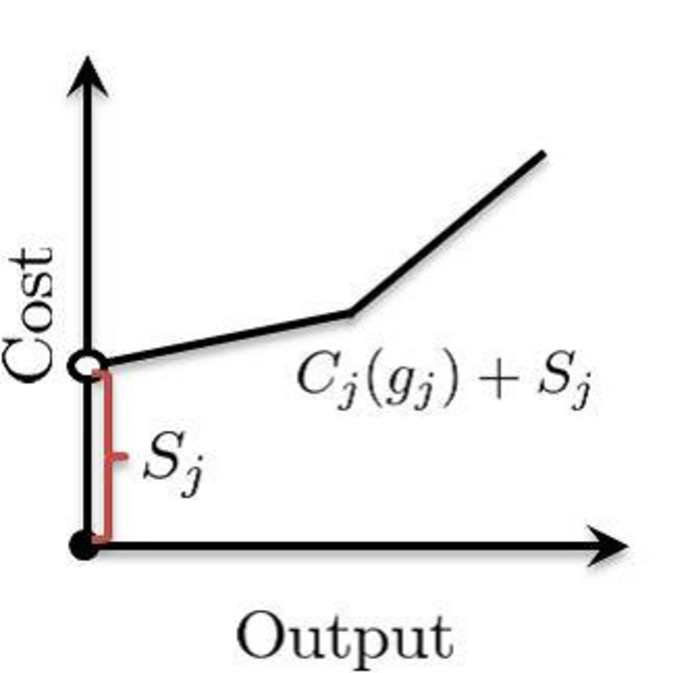}
    \caption{Example of a cost function of generator $j$. 
    $S_j$ denotes the startup cost and $C_j(g_j)$ denotes 
    the lower semicontinuous variable cost for an output $g_j$.}\label{generator}
  \end{figure}
  We assume that the supplier has several types of generators 
  that might have different variable cost functions and 
  startup costs as shown in Fig. \ref{generator}. 
  The supplier would like to minimize 
  the generating cost for a given demand $y$. 
  Then, the cost function $v:[0,\infty)\rightarrow [0,\infty]$ 
  for the supplier can be represented as the optimal value of  
  the following optimization problem:
  \begin{align}
  v(y):=\left\vert
    \begin{aligned}
       \inf_{\g,\z} ~& \sumj C_j(g_j) + \sumj S_j z_j \\
       \text{s.t. } ~ & \sumj g_j = y \\
                      & m_j z_j \leq g_j \leq M_j z_j,\quad z_j\in\{0,1\},~\forall j,
    \end{aligned}\right.\label{UCP}
  \end{align}
  where $C_j(g_j)$ is a lower semicontinuous 
  generating cost function for an output $g_j$; 
  $S_j$ denotes the startup cost; $m_j$ and $M_j$ denote the minimum and 
  maximum outputs; and $z_j$ represents a decision 
  to commit generator $j$. Problem \eqref{UCP} is called 
  a {\em unit commitment problem (UCP)}. 
  The optimal value function $v$ of the UCP is characterized by 
  lower semicontinuity, and it may be nonconvex and discontinuous. 
  
  \subsection{Convex hull price}
  A number of studies have proposed pricing models in the settings of UCP. 
  Some of them (e.g. \cite{Hogan2003,Gribik2007,Bjorndal2008,Zhang2009}) 
  have tried to reduce the {\em uplift payment} which is defined as follow. 
  \begin{DEFI}[Uplift payment]
    Suppose that a demand $y$ is given. 
    The uplift payment $\Pi(p;y)$ for a price $p$ is defined as follows:
    \begin{align} 
      \Pi(p;y) := \sup_x\{px-v(x)\} - \{py-v(y)\}.\label{uplift_eq}
    \end{align}
  \end{DEFI}
  \begin{figure}
    \centering
    \includegraphics[width=.35\columnwidth]{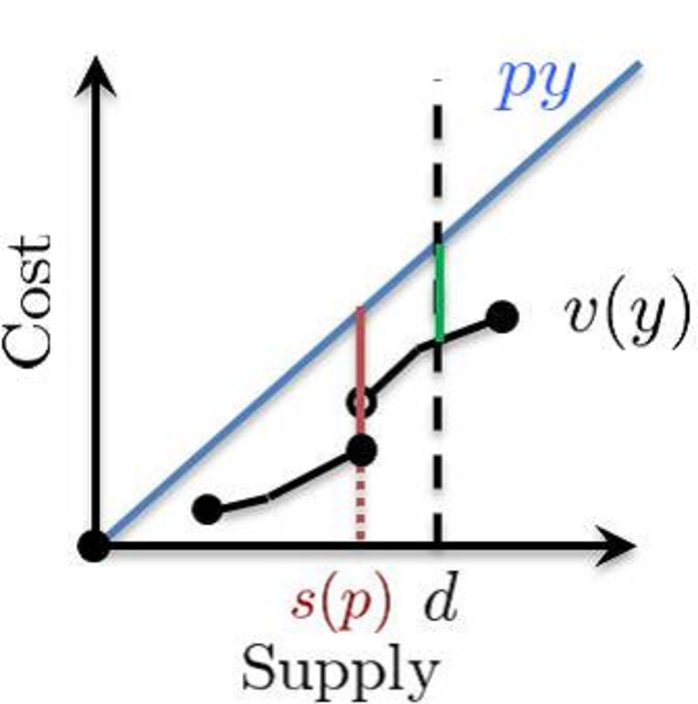}
    \caption{Illustration of the uplift payment. 
    In this picture, $py$ represents net sales. 
    The margin between $py$ and $v(y)$ represents the profit for the supplier. 
    In this case, the optimal profit for the supplier is obtained at $s(p)$.
    However, demand $d$ is given and the supplier has to supply $d$. 
    Thus, the actual profit for the supplier is obtained at $d$. 
    The gap between the optimal profit and the actual profit is called the uplift payment. 
    It is formulated as \eqref{uplift_eq}.}\label{uplift_fig}
  \end{figure}
  A conceptual illustration is given in Fig. \ref{uplift_fig}. 
  The first term of \eqref{uplift_eq} represents the maximum profit 
  for the supplier, that is realizable under a price $p$. 
  The second term of \eqref{uplift_eq} represents the actual profit 
  for the supplier under a price $p$ and the given demand $y$. 
  The uplift payment can be regarded as a measure of disadvantage for 
  the supplier, or as a cost for the ISO to incentive the supplier 
  to supply the given demand $y$ under the price $p$. 
  Therefore, it is natural to find the price that minimizes 
  the uplift payment. 
  
  Such a price can be obtained by the convex hull pricing (CHP) model 
  (a.k.a. extended locational marginal pricing model)
  proposed by Gribik et al. \cite{Gribik2007}. 
  Before introducing the CHP, we define the convex hull of the cost function $v(y)$:
  \begin{DEFI}[Convex hull of the cost function $v$]
    The convex hull $v^h$ of the cost function $v$ is defined as 
    \[v^h(y) := \inf_{\mu}\{\mu \mid (y,\mu)\in \text{conv}(\text{epi}(v))\},\]
    where $conv(A)$ is the convex hull of a set $A$, and $epi(v)$ denotes the epigraph of $v$.
  \end{DEFI}
  \begin{figure}
    \centering
    \includegraphics[width=.35\columnwidth]{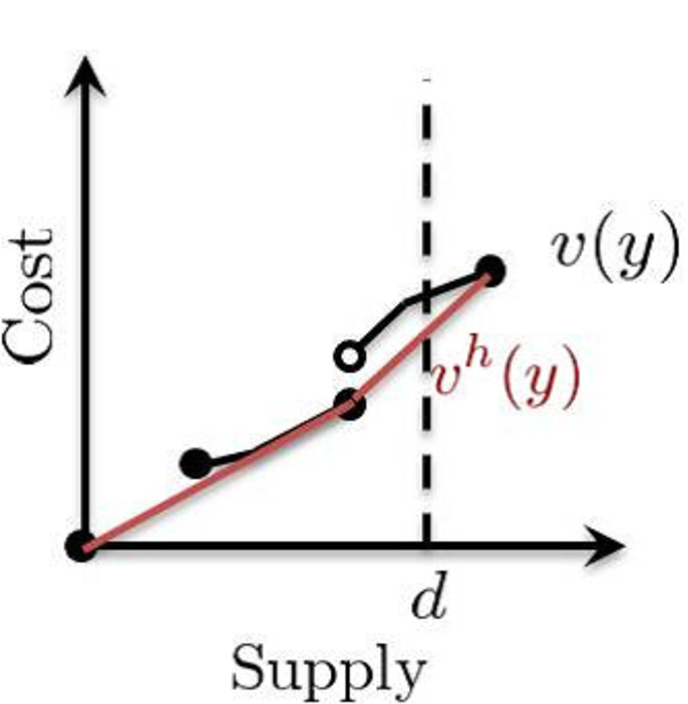}
    \caption{Illustration of the convex hull $v^h$ of the cost function $v$. 
    The CHP model uses the subgradient of $v^h$ at the given demand $d$ as a price $p^h$.}\label{CHcost}
  \end{figure}
  We illustrate the convex hull $v^h$ of the cost function $v$ in Fig. \ref{CHcost}. 
  The convex hull $v^h$ can be seen as the largest convex function 
  that is bounded above by $v$ at any point in its domain.  
  In addition, it is known that $v^h$ is coincide with biconjugate 
  (i.e., convex conjugate of convex conjugate) of $v$, if $v$ is lower semicontinuous 
  (see \cite{Bertsekas2003Convex}). Note that $v^h$ may be a nonsmooth function. 
  Now, we can define the CHP.
  \begin{DEFI}[Convex hull price]
    The convex hull price  $p^h$ is defined as the subgradient of 
    the convex hull $v^h$ of the cost function $v$ at a given demand $y$
    \footnote{Note that the convex hull price coincides with 
    the marginal cost price if $v$ is convex and differentiable.}, i.e., 
    \[ p^h \in \partial v^h(y).\]
  \end{DEFI}
  It is difficult to calculate the CHP according to the definition 
  because the explicit function form of $v(\cdot)$ and its convex 
  hull $v^h(\cdot)$ are generally too complicated to compute. 
  To address this issue, 
  Gribik et al. investigated the following important properties of the CHP 
  in connection with duality theory. 
  \begin{PROP}[\cite{Gribik2007}]\label{min_uplift}
    Suppose that $\lambda^*$ be an optimal solution of the following 
    partial Lagrangian dual problem of \eqref{UCP}:
    \begin{align}
      \max_{\lambda}\left\vert
      \begin{aligned}
        \min_{\g,\z} ~ & \sumj C_j(g_j) + \sumj S_jz_j + \lambda(y-\sumj g_j)\\
        \text{s.t. } ~ & m_j z_j \leq g_j \leq M_jz_j,\quad z_j\in\{0,1\},~ \forall j.
      \end{aligned}\right. \label{UCP_dual}
    \end{align}
    Then, $\lambda^*$ is a convex hull price, i.e., 
      \[\lambda^* \in \partial v^h(y).\]
    Moreover, $\lambda^*$ minimizes the uplift payment $\Pi(\cdot;y)$, i.e.,
      \[\lambda^* \in \argmin_{p\geq 0} ~ \Pi(p;y).\]
  \end{PROP}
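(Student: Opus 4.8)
The plan is to identify the inner minimization in \eqref{UCP_dual} with $-v^{*}$, where $v^{*}(\lambda):=\sup_{x}\{\lambda x-v(x)\}$ is the convex conjugate of the UCP value function $v$, so that \eqref{UCP_dual} becomes the explicit concave maximization $\max_{\lambda}\{\lambda y-v^{*}(\lambda)\}$; both assertions then follow from standard Fenchel conjugacy, this being essentially the usual ``Lagrangian dual of a mixed-integer program = convex-relaxation value'' argument specialized to \eqref{UCP}. Throughout I would use that $v$ is lower semicontinuous (noted after \eqref{UCP}) and proper (feasibility of the UCP at $y$, together with $v\ge 0$), so that $v^{h}=v^{**}$ as already recalled in the excerpt, with $v^{h}$ closed, proper, convex and nonnegative.

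First I would rewrite the dual objective $q(\lambda)$ of \eqref{UCP_dual}, whose minimum runs over $(\g,\z)$ satisfying the box--integrality constraints $m_jz_j\le g_j\le M_jz_j$, $z_j\in\{0,1\}$. Since the cost $\sumj C_j(g_j)+\sumj S_jz_j$ and these constraints are separable over $j$ while the only coupling in \eqref{UCP} is the balance constraint $\sumj g_j=y$, I would partition the feasible $(\g,\z)$ by the total output $x:=\sumj g_j$; the minimum over each fibre is exactly $v(x)$ by the definition \eqref{UCP}. Hence, for every $\lambda\in\Rbb$,
\[
 q(\lambda)=\lambda y+\inf_{x\ge 0}\bigl(v(x)-\lambda x\bigr)=\lambda y-v^{*}(\lambda),
\]
so that $\max_{\lambda}q(\lambda)=(v^{*})^{*}(y)=v^{**}(y)=v^{h}(y)$. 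Now $\lambda^{*}$ maximizes the concave map $\lambda\mapsto\lambda y-v^{*}(\lambda)$, i.e.\ $0\in\partial v^{*}(\lambda^{*})-y$, that is $y\in\partial v^{*}(\lambda^{*})$; applying the conjugate subgradient theorem to the closed proper convex function $v^{h}=v^{**}$, whose conjugate is $v^{*}$, gives $y\in\partial v^{*}(\lambda^{*})\iff\lambda^{*}\in\partial v^{**}(y)=\partial v^{h}(y)$, which is the first claim.

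For the uplift statement, \eqref{uplift_eq} reads $\Pi(p;y)=v^{*}(p)-py+v(y)$, and the term $v(y)$ is independent of $p$, so $\argmin_{p\ge 0}\Pi(p;y)=\argmax_{p\ge 0}\{py-v^{*}(p)\}=\argmax_{p\ge 0}q(p)$; moreover $v\ge 0$ and $v(0)=0$ force $v^{*}(p)=0$ for $p\le 0$, hence $q(p)=py\le 0\le v^{h}(y)=\max_{\lambda}q(\lambda)$ there, so the unconstrained maximizers of $q$ may be taken nonnegative and agree with the $p\ge 0$-constrained ones; therefore $\lambda^{*}\in\argmin_{p\ge 0}\Pi(p;y)$. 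The step requiring the most care is the first one: justifying the fibre decomposition that produces $q(\lambda)=\lambda y-v^{*}(\lambda)$ (in particular that the attained fibre minima and the infimum over $x$ may be interchanged, which relies on the $j$-separability) and checking the lower-semicontinuity and properness hypotheses that license $v^{h}=v^{**}$ and the conjugate subgradient theorem; the remaining computations are routine conjugate calculus.
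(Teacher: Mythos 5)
Your argument is correct, and it is essentially the route the paper itself takes: the paper states Proposition~\ref{min_uplift} without proof (deferring to \cite{Gribik2007}), but its proof of Proposition~\ref{our_CHP} uses exactly your machinery --- rewriting the inner Lagrangian minimum over $X$ as $\max_{\eta}\{\lambda\eta-v(\eta)\}=v^c(\lambda)$ via the same fibre decomposition, and then obtaining the subgradient claim from the identity $v^{cc}(d^*)=\lambda^*d^*-v^c(\lambda^*)$ together with $v^{cc}=v^h$. Your additional observations (the reduction of $\Pi(\cdot;y)$ to the dual objective up to the constant $v(y)$, and the check that the maximizer can be taken nonnegative) fill in the parts the paper leaves to the citation and are sound, up to the degenerate case $y=0$ where a negative unconstrained maximizer could in principle exist.
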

  Proposition \ref{min_uplift} states that the Lagrangian multiplier 
  for \eqref{UCP} gives the CHP, and the CHP is the best price 
  in the sense of reducing the uplift payment $\Pi(\cdot;y)$. 
  Many researcher have studied algorithms for 
  \eqref{UCP_dual}, e.g., \cite{Wang2013c,Wang2010}. 
  However, there still remains a difficulty to solve 
  a mixed integer programming problem in \eqref{UCP_dual}. 
\section{Pricing Model and Algorithm}\label{Model and Algorithm}
  In this section, we present a new dynamic pricing model 
  and algorithm that deals with the nonconvex cost function $v$ 
  of \eqref{UCP}. Here, we assume that the consumer's utility function $u$ 
  is concave, non-decreasing, $\lim_{d\rightarrow 0}\dot{u}(d)=0$. 
  
 \subsection{Pricing model}
  By adding $v$ of \eqref{UCP} to \eqref{Miyano_max}, 
  we derive a new social welfare maximization problem 
  as follows:
  \begin{align}
    \max_{\g,\z,d}~& u(d) - \left\{\sumj C_j(g_j) + \sumj S_jz_j\right\}\nonumber\\
    \text{s.t. }~& \sumj g_j=d, \quad d \geq 0\label{ISO_opt_proposed}\\
                 & z_jm_j\leq g_j \leq z_jM_j,\quad z_j\in\{0,1\}.\quad \forall j,\nonumber
  \end{align}
  Since the ISO does not know the utility function $u$, 
  it cannot solve \eqref{ISO_opt_proposed}. Therefore, we alternatively 
  consider a partial Lagrangian dual problem of \eqref{ISO_opt_proposed}.
  For notational simplicity, 
  let $X$ denotes the feasible set for outputs $\g$ 
  and commitment decisions $\z$, i.e.,
  \begin{align*}
    X:=\{(\g,\z)~\vert~ m_jz_j\leq g_j \leq M_jz_j, ~z_j\in\{0,1\},\forall j\}.
  \end{align*}
  The partial Lagrangian dual problem of \eqref{ISO_opt_proposed} 
  is formulated by adding $v$ of the UCP \eqref{UCP} to \eqref{Miyano_dual} as follows: 
  \begin{align}
    \min_{\lambda} ~ \varphi(\lambda), \label{ISO_dual_proposed}
  \end{align}
  where
  \begin{align}
    \varphi(\lambda) :&=\max_{d\geq 0} \{u(d) - \lambda d\}\nonumber\\
    &~~~+\max_{(\g,\z)\in X}\left\{\lambda\sumj g_j - {\left\{\sumj C_j(g_j) + \sumj S_jz_j\right\}}\right\}. \label{dual_function}
  \end{align}
  %Note that there would be a duality gap 
  %since \eqref{ISO_opt_proposed} is a mixed integer programming problem. 
  The objective function $\varphi$ may be nonsmooth 
  because of the nonsmoothness of the cost function $v$. 
  Note that the problems \eqref{ISO_opt_proposed} and \eqref{ISO_dual_proposed} 
  take into account price-sensitive demands. 
  Hence they are different from existing CHP models such as those 
  in \cite{Gribik2007,Wang2013c,Wang2010}. 
  Now we have reached the following result. 
  \begin{PROP}\label{our_CHP} 
  The following statements hold:
    \begin{enumerate}
      \item The partial Lagrangian dual problem \eqref{ISO_dual_proposed} has 
      an optimal solution $(\lambda^*,\g^*,\z^*,d^*)$. \label{first_part}
      \item The problem \eqref{ISO_dual_proposed} is equivalent to 
      \[ \max_{d\geq 0} ~ u(d) - v^h(d), \]
      \item $\lambda^*$ is a convex hull price, i.e., \label{third_part}
      \[ \lambda^* \in \partial v^h(d^*). \]
    \end{enumerate}
  \end{PROP}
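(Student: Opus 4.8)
The plan is to reduce all three claims to convex duality for the single program $\max_{d\ge 0}\{u(d)-v^{h}(d)\}$, by observing that the supplier term of the dual function $\varphi$ in \eqref{dual_function} never ``sees'' the nonconvexity of $v$. Concretely, regrouping the inner maximization over $(\g,\z)\in X$ by the total output $y=\sum_j g_j$ and using the definition \eqref{UCP} of $v$,
\[
  \max_{(\g,\z)\in X}\left\{\lambda\sum_j g_j-\sum_j C_j(g_j)-\sum_j S_j z_j\right\}
  =\max_{y}\{\lambda y-v(y)\}=v^{*}(\lambda),
\]
the convex conjugate of $v$. Since $v$ is lower semicontinuous, $v^{h}$ is its biconjugate $v^{**}$, so $v^{*}=(v^{h})^{*}$, and hence
\[
  \varphi(\lambda)=\max_{d\ge 0}\{u(d)-\lambda d\}+(v^{h})^{*}(\lambda),
\]
which is exactly the Lagrangian dual function of the convex program $\max\{u(d)-v^{h}(y)\mid d=y,\ d\ge 0\}$ obtained by dualizing $d=y$ with multiplier $\lambda$.

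With this identity in hand, the second claim is strong Lagrangian duality for that convex program: $-u$ (restricted to $[0,\infty)$) and $v^{h}$ are closed proper convex, and a constraint qualification holds because $\mathrm{dom}(-u)=[0,\infty)$ and $\mathrm{dom}(v^{h})$, a bounded interval since $X$ is compact, have overlapping relative interiors; therefore $\min_\lambda\varphi(\lambda)=\max_{d\ge 0}\{u(d)-v^{h}(d)\}$.

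For the first claim I would establish attainment directly. The function $\varphi$ is closed proper convex, being a sum of closed convex conjugates composed with affine maps, and it is coercive: $(v^{h})^{*}(\lambda)\to+\infty$ as $\lambda\to+\infty$ because $\mathrm{dom}(v^{h})$ is bounded with positive right endpoint, while $\max_{d\ge 0}\{u(d)-\lambda d\}=+\infty$ for $\lambda<0$ because $u\ge 0$ is non-decreasing; hence the minimum is attained at some $\lambda^{*}$. At $\lambda^{*}$ the inner maximum over $(\g,\z)\in X$ is attained since $X$ is compact and each $C_j$ is lower semicontinuous (so the objective is upper semicontinuous), and the inner maximum over $d\ge 0$ is attained by the assumptions on $u$. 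Strong duality plus attainment on both sides yields a saddle point, and choosing $(\lambda^{*},\g^{*},\z^{*},d^{*})$ to be this saddle point---in particular with $\sum_j g^{*}_j=d^{*}$---gives the asserted optimal solution of \eqref{ISO_dual_proposed}.

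The third claim is then the saddle-point optimality condition at $(\lambda^{*},d^{*})$: writing $y^{*}=\sum_j g^{*}_j=d^{*}$, the term linear in $\lambda$ forces $y^{*}=d^{*}$, and $y^{*}$ then maximizes $\lambda^{*}y-v^{h}(y)$ over $y$, which is precisely $\lambda^{*}\in\partial v^{h}(y^{*})=\partial v^{h}(d^{*})$ (symmetrically, $d^{*}$ maximizes $u(d)-\lambda^{*}d$, i.e.\ the price is a marginal utility). The main obstacle is making the conjugacy step rigorous---checking that $v$ is lower semicontinuous and that $\mathrm{conv}(\mathrm{epi}\,v)$ is closed so that $v^{h}$ genuinely equals $v^{**}$, which is where compactness of $X$ and lower semicontinuity of the $C_j$ enter, and verifying the constraint qualification; once $\varphi(\lambda)=\max_{d\ge 0}\{u(d)-\lambda d\}+(v^{h})^{*}(\lambda)$ is established the remainder is routine convex duality.
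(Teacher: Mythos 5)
Your proof takes essentially the same route as the paper's: you identify the supplier term of \eqref{dual_function} with the conjugate $v^{*}(\lambda)=\max_{y}\{\lambda y-v(y)\}$, pass to $v^{h}=v^{**}$, obtain part 2 by a minimax/strong-duality argument (the paper cites a minimax interchange from Bertsekas where you invoke a relative-interior constraint qualification --- same substance), prove existence via coercivity of $\varphi$ driven by the boundedness of $\mathrm{dom}(v^h)\subseteq[0,\sum_j M_j]$ (the paper's bounded-level-set argument uses exactly the lower bound $\varphi(\lambda)\geq u(0)+\lambda\sum_j M_j-\text{const}$), and read off part 3 from the optimality conditions (the paper does the subgradient inequality by hand).

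One assertion in your write-up is wrong and worth flagging, though it is not needed for any of the three conclusions: you claim the saddle point can be chosen ``in particular with $\sum_j g^{*}_j=d^{*}$.'' For the \emph{convexified} supplier problem the maximizer $y^{*}$ of $\lambda^{*}y-v^{h}(y)$ does equal $d^{*}$, and that is all part 3 requires. But $(\g^{*},\z^{*})$ in part 1 solves the \emph{original} integer-constrained inner maximization, and its total output $\sum_j g^{*}_j$ need not equal $d^{*}$: the set of maximizers of $\lambda^{*}y-v(y)$ is in general a strict subset of the maximizers of $\lambda^{*}y-v^{h}(y)$, and the mismatch between the supplier's profit-maximizing schedule and the equilibrium demand is precisely the uplift payment the paper is trying to minimize --- if it were always zero the whole problem would be moot. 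Existence of $(\g^{*},\z^{*},d^{*})$ follows, as you also note, from compactness of $X$ with upper semicontinuity of the objective and from the assumptions on $u$, without any feasibility claim; and part 3 should be phrased directly in terms of $d^{*}$ attaining both inner maxima of the convexified Lagrangian at $\lambda^{*}$, not in terms of $\sum_j g_j^{*}$.
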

  \begin{proof} 
  The idea behind the proof of \ref{first_part}) is to use 
the general version of Weierstrass' Theorem \cite{Bertsekas2003Convex}, 
which states that $\varphi$ has a minimum point if $\varphi$ is 
a closed\footnote{A function $f$ is said to be {\em closed} 
if epi$(f)$ is a closed set.} 
proper\footnote{A function $f$ is said to be {\em proper} 
if there exists $\x\in\mathbb{R}^n$ such that $f(\x)\not = \infty$ 
and there does not exist $\x '\in\mathbb{R}^n$ such that $f(\x ')=-\infty$.} 
function and has a nonempty and bounded level set. 
From duality theory it is known that $\varphi$ is lower semicontinuous, 
and this guarantees the closedness of $\varphi$. 
For all $\lambda < 0$, $\varphi(\lambda)=\infty$ holds since 
the first term in \eqref{dual_function} is infinite, $\infty$. 
For all $\lambda \geq 0$, the first term in \eqref{dual_function} 
is finite, and an optimal solution exists. 
The second term in \eqref{dual_function} is also finite, and 
an optimal solution exists because of the compactness of $X$.
Thus, if $\lambda^*$ exists, $\lambda^*$ is a nonnegative number and 
$(\g^*,\z^*,d^*)$ also exists  
(This implies that $d\leq\sum_jg_j$ is an effective constraint). 
Let us choose $\gamma\in \mathbb{R}$ so that the level set 
$L=\{\lambda ~\vert~ \varphi(\lambda)\leq \gamma\}\in (0,\infty)$ is non-empty. 
From \eqref{dual_function}, we have
\[\varphi(\lambda)\geq u(0) + \lambda\sumj M_j - (\sumj C_j(M_j) + \sumj S_j)\]
for any $\lambda$. The right-hand side of the inequality is derived by setting 
$d=0$, $g_j=M_j$, and $z_j=1$ to \eqref{dual_function}. 
There exists a sufficiently large $\hat{\lambda}>0$ 
such that $\varphi(\hat{\lambda})>\gamma$. 
Hence, $L$ is bounded, 
and the conditions on the existence of a minimum point of 
$\varphi$ are satisfied. 

The remaining parts can be 
obtained by mimicking the argument in \cite{Gribik2007}. 
The second term in \eqref{dual_function} is written as 
\begin{align*}
\max_{\eta,(\g,\z)\in X}\left\{\lambda\eta - {\left\{\sumj C_j(g_j)+\sumj S_jz_j\right\}}~\vert~\eta=\sumj g_j\right\}\\
=\max_{\eta}\{\lambda \eta - v(\eta)\} = v^c(\lambda),
\end{align*}
where $v^c$ is the convex conjugate function of $v$. 
Problem \eqref{ISO_dual_proposed} can be expressed as 
\begin{align*}
\min_{\lambda} \varphi(\lambda)&=\min_{\lambda}\max_{d\geq 0}\{u(d)-\lambda d +v^c(\lambda)\}\\
&=\max_{d\geq 0}\min_{\lambda}\{u(d)-\lambda d +v^c(\lambda)\}\\
&=\max_{d\geq 0}\{u(d) - v^{cc}(d)\},
\end{align*}
where $v^{cc}$ is the biconjugate function of $v$, i.e., 
the convex conjugate function of $v^c$. The second equality 
holds from \cite[Prop. 2.6.4]{Bertsekas2003Convex}. 
To prove \ref{third_part}), we obtain the following result for all $d\geq 0$:
\begin{align*}
v^{cc}(d) &= \sup_{\lambda}\{d\lambda-v^c(\lambda)\}\\
&\geq d\lambda^* - v^c(\lambda^*)\\
&= d\lambda^* -v^c(\lambda^*)+\lambda^*d^*-\lambda^*d^*\\
&= v^{cc}(d^*)+\lambda^*(d-d^*).
\end{align*}
This implies that $\lambda^*$ is a subgradient of $v^{cc}$ at $d^*$. 
Since $v^{cc}=v^h$ holds (see \cite{Bertsekas2003Convex}), 
this completes the proof.
  \end{proof}
  Proposition \ref{our_CHP} states that the Lagrange multiplier 
  $\lambda^*$ for the social welfare maximization problem \eqref{ISO_opt_proposed} gives a CHP. 
  This implies that $\lambda^*$ minimizes the uplift payment 
  at the equilibrium demand $d^*$. Thus, it is reasonable 
  to set $\lambda^*$ as a price.

  \subsection{Subgradient algorithm}
  The fact that the dual function $\varphi$ in \eqref{ISO_dual_proposed} 
  is convex and lower semicontinuous is known from duality theory. 
  Using a basic optimization method, the dual problem 
  \eqref{ISO_dual_proposed} can be solved in the electricity market model. 
  Here we provide the following subgradient algorithm for \eqref{ISO_dual_proposed}. 
  \begin{ALGO} \label{subgradient}
    Set the step size $\gamma^k>0$ $(k=1,2,...,N)$.
    \begin{enumerate}
      \item The ISO sets the initial price $\lambda^0$. $k \leftarrow 1$.
      \item According to the given price $\lambda^k$, 
      the consumer and supplier adjust their respective 
      demand $d^k$ and supply $\sumj g_j^k$ by solving 
      the following utility and profit maximization problems:
      \begin{align*}
        d^k &\in \argmax_{d\geq 0} ~ u(d) - \lambda^k d,\\
        (\g^k,\z^k) &\in \\
        &\argmax_{(\g,\z)\in X} ~ \lambda^k\sumj g_j - \{\sumj C_j(g_j) + \sumj S_jz_j\}.
      \end{align*}
      \item The ISO updates the price $\lambda^{k+1}$ as 
      \[ \lambda^{k+1} = \lambda^k - \gamma^k(\sumj g_j^k - d^k). \]
      $k \leftarrow k+1$. 
      \item Repeat steps 2 and 3 $N$ times. 
      The ISO then accepts the price $\lambda^N$. 
      The scheduled levels of 
      production and load are both $d^N$. 
    \end{enumerate}
  \end{ALGO}
  It is known that $\sumj g_j^k - d^k$ is a subgradient of $\varphi$ at $\lambda^k$. 
  The subgradient algorithm for convex optimization with appropriate step sizes 
  is proven to converge to a minimum (see \cite{Bertsekas2003Convex}). 
  Note that Algorithm \ref{subgradient} is an approximation algorithm 
  because the stopping criterion is defined by the number of iterations. 
  In the electricity market model that we assumed, 
  the ISO has to send the information to the supplier and the consumer 
  at each iteration, and vice versa. 
  Therefore, it is unrealistic to iterate many times. 
  Moreover, although the necessary and sufficient optimality conditions for \eqref{ISO_dual_proposed} can be obtained by modifying \cite[eq.(9)]{Wang2013c}, we have to solve an additional convex quadratic program \cite[eq.(12)]{Wang2013c} at each iteration in order to check the conditions. 
  These calculation may cost high. 
  The numerical results in the next section show that Algorithm \ref{subgradient} reduces 
  the uplift payment enough small in a few iterations. Thus 
  it would not be a big disadvantage to define the stopping criterion 
  by the number of iteration. 
\section{Numerical Experiments}\label{Numerical Simulations}
  Here, we present numerical experiments that confirm the 
  efficiency of our pricing model and algorithm in reducing 
  uplift payments. We assume that the ISO makes the next day's 
  pricing decision hourly, i.e., a day-ahead market. 
  We did the numerical experiments on 
  a Intel Core i5 M540 processor (2.53 GHz) and 
  4GB of physical memory 
  with Windows 7 Professional 64bit Service Pack 1. 
  Numerical algorithms were written in R language 
  version 3.0.1. The GNU Linear Programming Kit package of 
  version 0.3-10 was used for solving linear programming 
  and mixed integer programming problems. 

%%%%
  \subsection{Cost functions}
    First, we shall consider cost functions of the supplier. 
    We used the examples from Gribik et al. \cite{Gribik2007} and Hogan et al. \cite{Hogan2003} 
    (shown in TABLEs \ref{Gribik_example} and \ref{Scarf_example}). 
    The example from \cite{Hogan2003} is the modification of Scarf example \cite{Scarf1994}. 
    The generators of the examples have piece-wise linear variable cost functions. 
    \begin{table}
      \centering
      \caption{Gribik Example \cite{Gribik2007}}\label{Gribik_example}
      \begin{tabular}{|c|cccccc|} \hline
      \multicolumn{1}{|c}{} & \multicolumn{6}{c|}{Generators} \\
      \multicolumn{1}{|c}{} & \multicolumn{2}{c}{A} & \multicolumn{2}{c}{B} & \multicolumn{2}{c|}{C} \\\cline{2-7}
      \multicolumn{1}{|c|}{} & Var1 & Var2 & Var1 & Var2 & Var1 & Var2 \\
      \multicolumn{1}{|c|}{Capacity (MW)} & 100 & 100 & 100 & 100 & 100 & 100 \\
      Minimum output (MW) & \multicolumn{2}{c}{0} & \multicolumn{2}{c}{0} & \multicolumn{2}{c|}{0} \\
      Startup cost (\$) & \multicolumn{2}{c}{0} & \multicolumn{2}{c}{6000} & \multicolumn{2}{c|}{8000} \\
      Var cost 1 (\$/MW)& \multicolumn{2}{c}{65} & \multicolumn{2}{c}{40} & \multicolumn{2}{c|}{25} \\
      Var cost 2 (\$/MW)& \multicolumn{2}{c}{110} & \multicolumn{2}{c}{90} & \multicolumn{2}{c|}{35} \\
      Number of units & \multicolumn{2}{c}{1} & \multicolumn{2}{c}{1} & \multicolumn{2}{c|}{1} \\\hline
      \end{tabular}
    \end{table}
    \begin{table}
      \centering
      \caption{Modified Scarf Example \cite{Hogan2003}}\label{Scarf_example}
      \begin{tabular}{|c|ccc|} \hline
      \multicolumn{1}{|c}{} & \multicolumn{3}{c|}{Generators} \\
      \multicolumn{1}{|c}{} & \multicolumn{1}{c}{Smokestack} & High Tech & MedTech \\\cline{2-4}
      Capacity & 16 & 7 & 6 \\
      Minimum Output & 0 & 0 & 2 \\
      Startup Cost & 53 & 30 & 0 \\
      Var cost & 3 & 2 & 7 \\
      Number of Units & 6 & 5 & 5 \\\hline
      \end{tabular}
    \end{table}
    \begin{figure}
      \begin{minipage}{.49\columnwidth}
        \centering
        \includegraphics[width=\columnwidth,bb=0 23 475 480,clip]{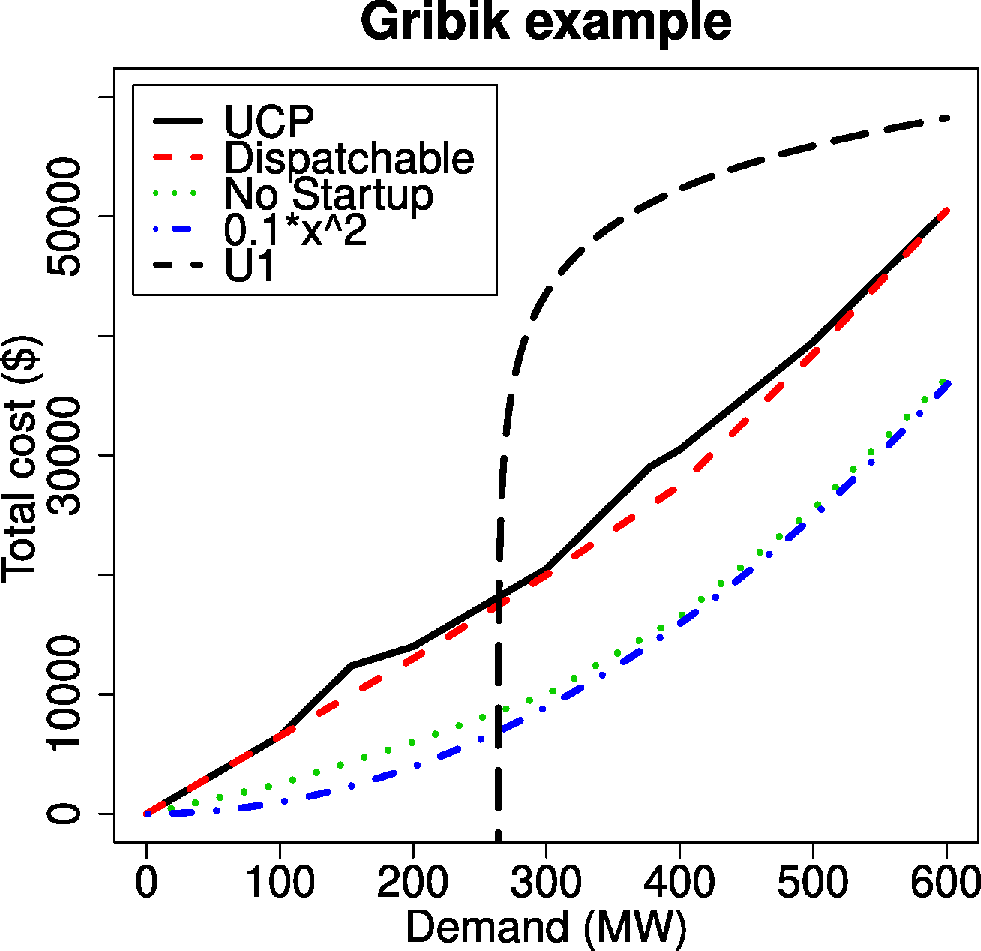}
      \end{minipage}
      \begin{minipage}{.49\columnwidth}
        \centering
        \includegraphics[width=\columnwidth,bb=0 23 475 480,clip]{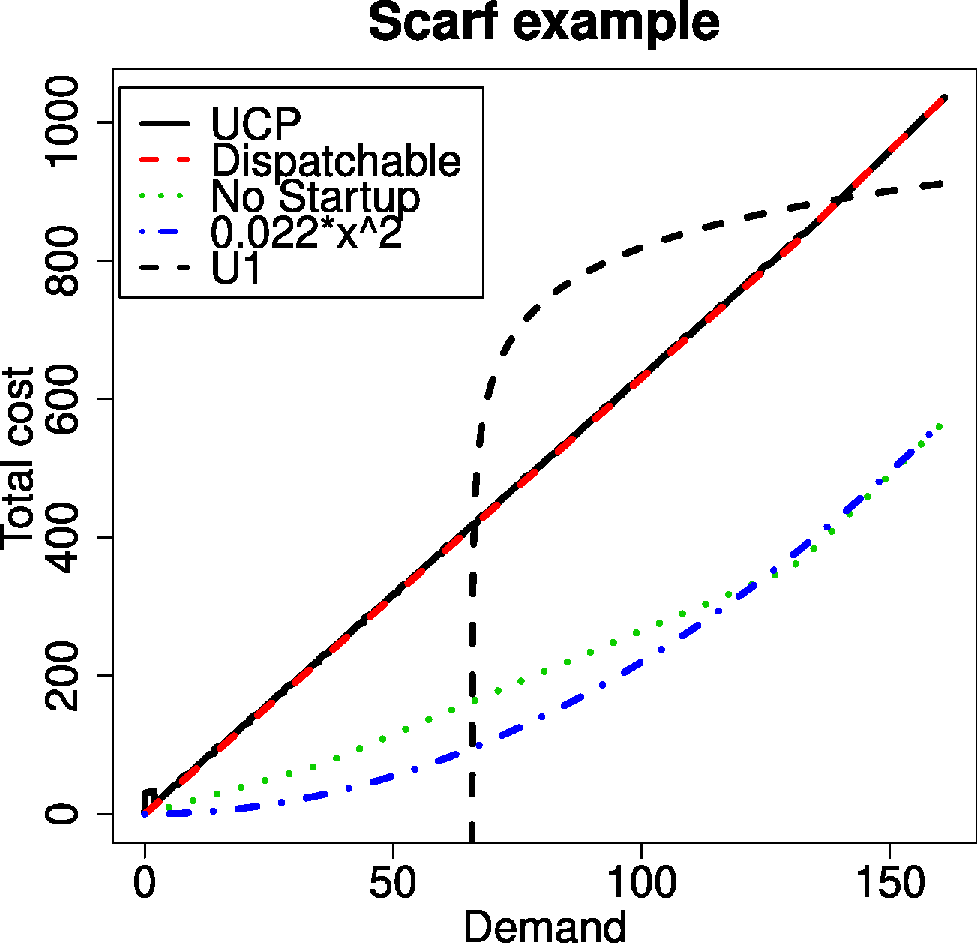}
      \end{minipage}
      \caption{Cost and utility functions}\label{CU}
    \end{figure}
    The four different cost functions with the two examples 
    are illustrated in Fig. \ref{CU}. %\ref{CU_Gribik} and \ref{CU_Scarf}. 
    `UCP' means the optimal value function $v(y)$ of the UCP \eqref{UCP}, i.e., 
    the actual cost function for the supplier. `Dispatchable' means the 
    optimal value function of the continuous relaxation model of \eqref{UCP}; 
    i.e., the model replaces the 0-1 integer constraints $z_j\in\{0,1\}$ in 
    \eqref{UCP} by $0\leq z_j \leq 1$ for all $j$. 
    The dispatchable pricing model in \cite{Gribik2007} 
    uses a subgradient of `Dispatchable' as a price. 
    We used this model for comparison of the uplift payments. 
    `No Startup' means the optimal value function of \eqref{UCP} 
    that ignores the startup cost of the generators, i.e., 
    \eqref{UCP} with $S_j=0$ for all $j$. 
    We also used a quadratic cost function that approximates `No Startup' 
    and that satisfies the assumption of 
    existing convex cost models \cite{Roozbehani2011,Miyano2012a} 
    (i.e., Assumption \ref{convex_concave}). 

    We can use a smooth and convex function approximating `UCP' in the existing models. 
    However, the use of such function is unrealistic 
    since the explicit function form of `UCP' is generally 
    too complicated to compute as we noted in Section \ref{Convex Hull Price}. 
    Note that our model doesn't need to calculate `UCP', 
    although our model sets a subgradient of the convex hull of `UCP' as a price.

%%%%
\subsection{Demand function}
  Following \cite{Miyano2012a}, we used the hourly demand function, 
  \begin{align}
    D_t(\lambda) = \mu_1\nu d_{1,t} + \mu_2(1+\delta_{2,t})d(\lambda),
    \label{demand_function}
  \end{align}
  where $\mu_1,\mu_2,\nu$ are positive parameters, 
  $d_{1,t}$ is a positive constant, 
  and $\delta_{2,t}$ is a random variable 
  distributed with $\mathcal{N}(0,0.01^2)$ $(t=1,2,\dots,24)$. 
  The first term in \eqref{demand_function} represents the minimum necessary demand, 
  and the second term in \eqref{demand_function} represents the swing in demand 
  depending on prices. 
  We used actual hourly demand data of 
  the Tokyo Electric Power Company from 0:00 to 23:00 
  August 30, 2012 \cite{TEPCO} as $d_{1,t}$ $(t=1,2,...,24)$. 
  $\nu$ is a scale parameter to adapt the demand to the examples' size. 
  We defined $d(\lambda)$ as a solution of the following 
  utility maximization problem: 
  \[ d(\lambda):=\argmax_{d\geq0}\{u(d)-\lambda d\},\]
  where $u(d)$ is a logarithmic utility function $a\log(d)$ 
  with a positive parameter $a$. 
  Positive parameters $a$, $\mu_1$, $\mu_2$ are adjusted 
  so that the sum of simulated hourly demands 
  $D_t(\lambda_t^N)$ remained nearly equal to the sum of 
  the scaled actual hourly demands $\nu d_{1,t}$, i.e.,
  \[ \sum\nolimits_{t=1}^{24} D_t(\lambda_t^N) \approx \nu\sum\nolimits_{t=1}^{24} d_{1,t}. \]
  Parameter settings for the demand function \eqref{demand_function} 
  are shown in the TABLE \ref{Params}. 
  \begin{table}
    \centering
    \caption{Parameter Settings for demand function \eqref{demand_function}}\label{Params}
    \begin{tabular}{|c|c|c|c|c|c|} \hline
     & $N$ & $a$ & $\nu$ & $\mu_1$ & $\mu_2$ \\\hline
    Gribik & 100 & $3.9\times10^4$ & 0.01 & 0.8 & 0.2 \\\hline
    Scarf & 100 & 455 & 0.0025 & 0.8 & 0.2 \\\hline
    \end{tabular}
  \end{table}
  \begin{table}
    \centering
    \caption{Summary of $d_{1,t}$}\label{Summary}
    \begin{tabular}{|c|c|c|c|} \hline
     & min & mean & max \\\hline
    $d_{1,t}$ & 28340.0 & 41086.7 & 50780.0 \\\hline
    \end{tabular}
  \end{table}

%%%%
\subsection{Utility function}
  In Section \ref{Market Model}, we assumed that the consumer 
  determines their demand $D_t$ to maximize their utility, i.e.,
  \[D_t(\lambda) = \argmax_{D \geq 0}\{U_t(D)-\lambda D\},\] 
  where $U_t$ is a utility function. In the case that the demand function 
  $D_t$ is given by \eqref{demand_function}, 
  $U_t$ can be represented as follows: 
  \[U_t(D) = a\mu_2(1+\delta_{2,t})\log(D-\mu_1\nu d_1) + C,\]
  where $C$ is a constant. 
  The concave lines in Fig. \ref{CU} %\ref{CU_Gribik} and \ref{CU_Scarf} 
  illustrate $U_1(D)$ with $C=20000$ for Gribik example 
  and $C=500$ for Scarf example. 
  The margin between $U_1(D)$ and `UCP' represents the social welfare \eqref{social_welfare} 
  under the condition that the supply and demand are equal. 

\subsection{Comparison of pricing models}
  \subsubsection{Uplift payments}
  First, let us compare the uplift payments of our model 
  with the convex cost model in \cite{Roozbehani2011, Miyano2012a}. 
  The convex cost model uses the quadratic functions in Fig. \ref{CU} as cost functions. 
  Its electricity price is given by a locational marginal price (LMP) 
  (see \cite{Roozbehani2011, Miyano2012a} for details). 
  On the other hand, our electricity price is given by the CHP. 
  The hourly optimal prices $\lambda^*_t~(t=1,2,\dots,24)$ for our model were 
  calculated with \eqref{ISO_dual_proposed},  
  whereas optimal prices for the existing convex cost model 
  were calculated with \cite[eq.(7)]{Miyano2012a}. 
  \begin{figure}
    \begin{minipage}{.49\columnwidth}
      \centering
      \includegraphics[width=\columnwidth,bb=0 23 475 480,clip]{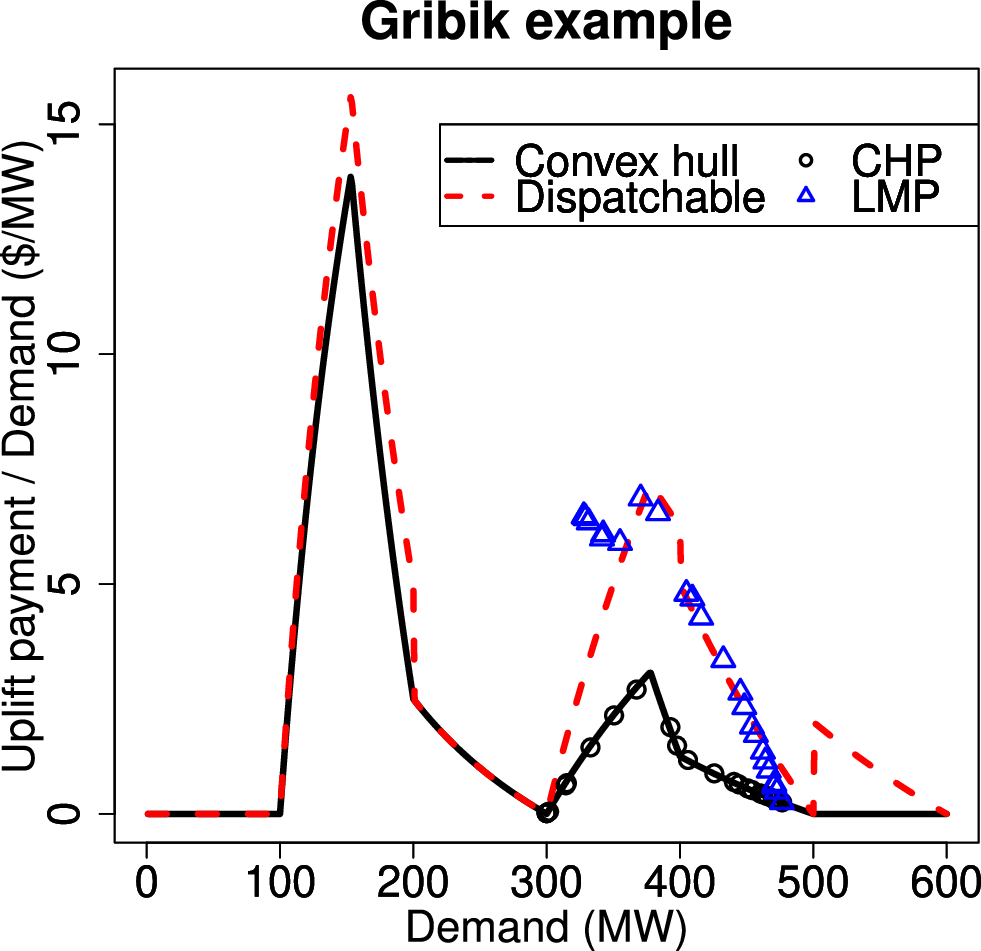}
    \end{minipage}
    \begin{minipage}{.49\columnwidth}
      \centering
      \includegraphics[width=\columnwidth,bb=0 23 475 480,clip]{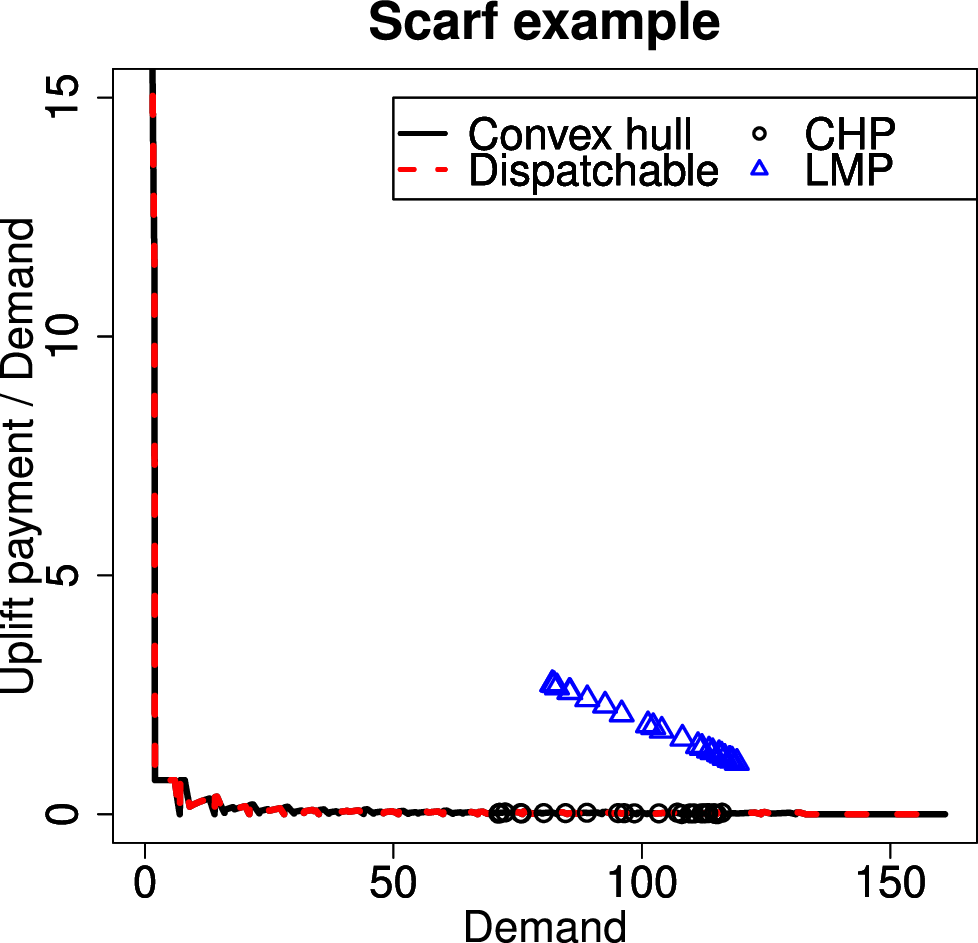}
    \end{minipage}
    \caption{Comparison of uplift payments of CHPs and LMPs}\label{CHP_LMP}
  \end{figure}
  %Fig. \ref{CHP_LMP} %\ref{CHP_LMP_Gribik} and \ref{CHP_LMP_Scarf} 
  %summarizes the experiment results. 
  Each point in Fig. \ref{CHP_LMP} shows the relation 
  between the equilibrium demand $D_t(\lambda_t^*)$ and 
  the uplift payments $\Pi(\lambda_t^*,D_t(\lambda_t^*))$ 
  for the hourly optimal prices $\lambda^*_t~(t=1,2,\dots,24)$. 
  The solid (dashed) line illustrates the uplift payments with the CHP 
  (dispatchable pricing) model in \cite{Gribik2007} at each fixed demand. 
  Note that the solid line shows the theoretical lower bound 
  of the uplift payments at each demand. 
  In our model, the uplift payments accrue less than half 
  that of the convex cost model. 
  
  \subsubsection{Demand, utility, and profit}
  \begin{figure}
    \begin{minipage}{.49\columnwidth}
      \centering
      \includegraphics[width=\columnwidth,bb=0 23 475 480,clip]{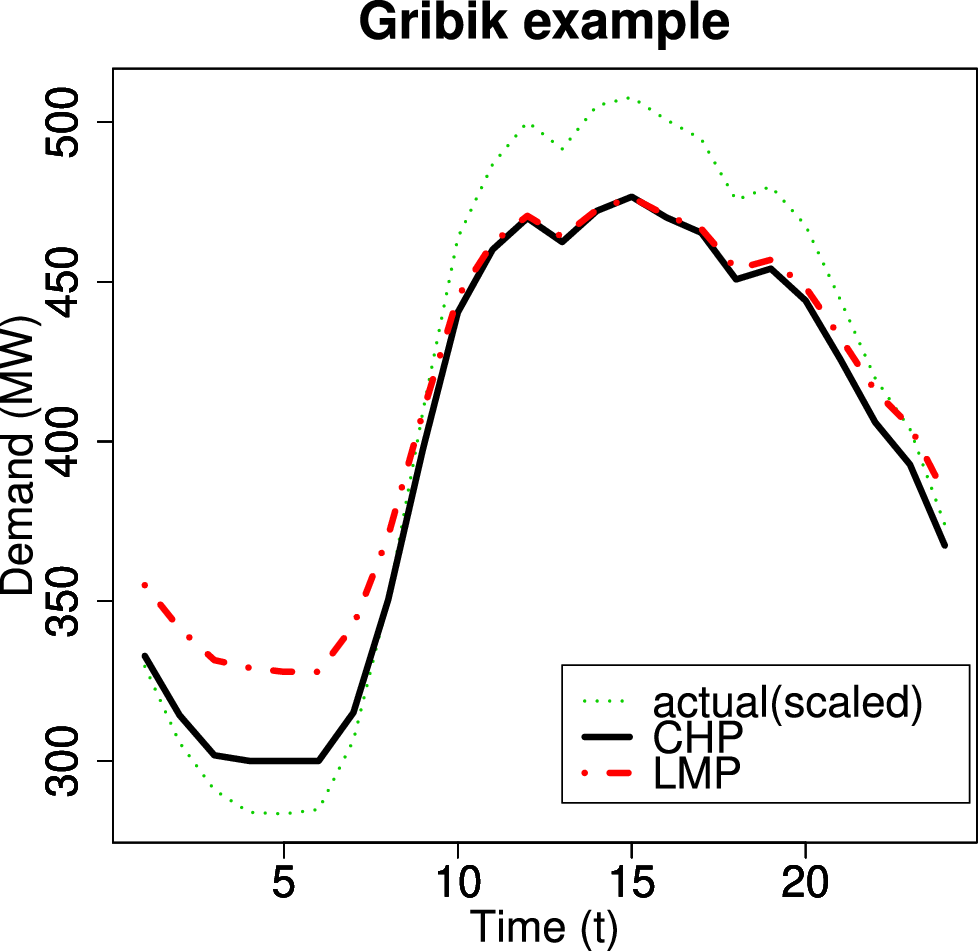}
    \end{minipage}
    \begin{minipage}{.49\columnwidth}
      \centering
      \includegraphics[width=\columnwidth,bb=0 23 475 480,clip]{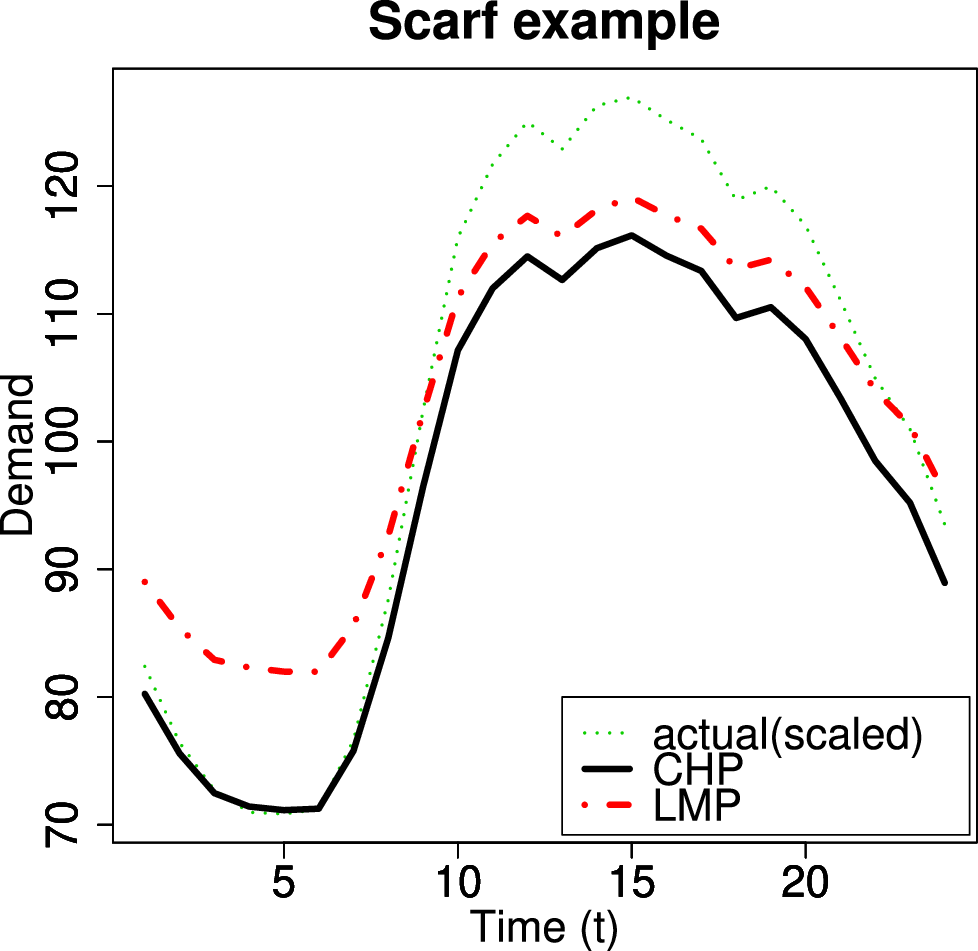}
    \end{minipage}
    \caption{Hourly equilibrium demand.}\label{demand}
  \end{figure}
  Fig. \ref{demand} %\ref{demand_Gribik} and \ref{demand_Scarf} 
  illustrates 
  the hourly equilibrium demand $D_t(\lambda^*_t)$. 
  In each example, our model tends to lead 
  less demand than that of the convex cost model, especially in the early morning. 
  It is because our model tends to set higher prices 
  than that of the existing model due to the startup costs (as shown in TABLE \ref{prices}); 
  Especially at an early hour, the startup costs occupy largely in total cost. 
  \begin{table}
    \centering
    \caption{Summary of simulated prices}\label{prices}
    \begin{tabular}{|c|c|c|c|c|} \hline
    \multicolumn{2}{|c|}{} & min & mean & max \\\hline
    \multicolumn{1}{|c|}{Gribik} & CHP & 87.1  & 94.0  & 95.2  \\\hline
    \multicolumn{1}{|c|}{} & LMP & 65.6  & 82.2  & 95.3  \\\hline \hline
    \multicolumn{1}{|c|}{Scarf} & CHP & 6.3  & 6.3  & 6.4  \\\hline
    \multicolumn{1}{|c|}{} & LMP & 3.6  & 4.5  & 5.2  \\\hline
    \end{tabular}
  \end{table}
  \begin{table}
    \centering
    \caption{Sum of simulated results with Gribik example}\label{sum_Gribik}
    \begin{tabular}{|c|c|c|c|} \hline
      & $\nu\sum d_{1,t}$   &  LMP   &  CHP   \\\hline
    Demand   &  9860.8   &  9860.4   &  9570.7   \\\hline
    Consumer's utility   & NA  &  354572.9   &  249772.4   \\\hline
    Supplier's profit   &  NA &  70362.1   &  181574.5   \\\hline
    Social welfare   &  NA &  424935.0   &  431346.9   \\\hline
    \end{tabular}
  \end{table}
  \begin{table}
    \centering
    \caption{Sum of simulated results with Scarf example}\label{sum_Scarf}
    \begin{tabular}{|c|c|c|c|} \hline
      & $\nu\sum d_{1,t}$   &  LMP   &  CHP   \\\hline
    Demand   &  2465.2   &  2465.368   &  2318.659   \\\hline
    Consumer's utility   & NA  &  7248.106   &  3203.201   \\\hline
    Supplier's profit   & NA  &  -4253.82   &  -31.5361   \\\hline
    Social welfare   & NA  &  2375.12   &  1802.465   \\\hline
    \end{tabular}
  \end{table}
  The sum of hourly results are summarized in TABLE \ref{sum_Gribik} and \ref{sum_Scarf}. 
  Since our model tends to set the price higher, 
  demand and consumer's utility would be lower 
  and supplier's profit would be higher. 
  On Scarf example, the supplier's profit takes negative value. 
  Because it is difficult to reap profit under 
  the price using a subgradient (or the gradient) 
  in the case that a cost function is almost linear 
  (even in such a case, 
  our model achieves more preferable results for the supplier than 
  the convex cost model). 
  If the ISO sets higher prices than LMPs and CHPs, 
  the supplier may make positive profit. However, such prices would 
  increase the uplift payments. 
  
  \subsubsection{Social welfare}
  While the social welfare of our model is larger than 
  that of the convex cost model for Gribik example, 
  it comes out opposite results for Scarf example 
  (see TABLEs \ref{sum_Gribik} and \ref{sum_Scarf}). 
  %(The logic will be discussed in the Section \ref{Discussion}.)
  %The purpose of our model is just to minimize the uplift payments. 
  Although our model minimizes the uplift payments, 
  we cannot say that our model leads larger social welfare. 
  
\subsection{Comparison of pricing algorithms}
  Next, let us investigate the performance of pricing algorithms. 
  The algorithm based on the steepest descent method \cite{Miyano2012a} 
  was used for the convex cost model. 
  We choose the step sizes, which are used in Algorithm 1 in \cite{Miyano2012a} and 
  Algorithm \ref{subgradient} in this paper, 
  as $\gamma^k=1/10k$ for Gribik example 
  and as $\gamma^k=1/100k$ for Scarf example $(k=1,2,\dots,N)$. 
  The initial price was $\lambda^0_t=100$ for Gribik example 
  and $\lambda^0_t=10$ for Scarf example $(t=1,2,\dots,24)$. 
  
  \subsubsection{Change of uplift payments with respect to iterations.}
  \begin{figure}
    \begin{minipage}{.49\columnwidth}
      \centering
      \includegraphics[width=\columnwidth,bb=0 23 475 480,clip]{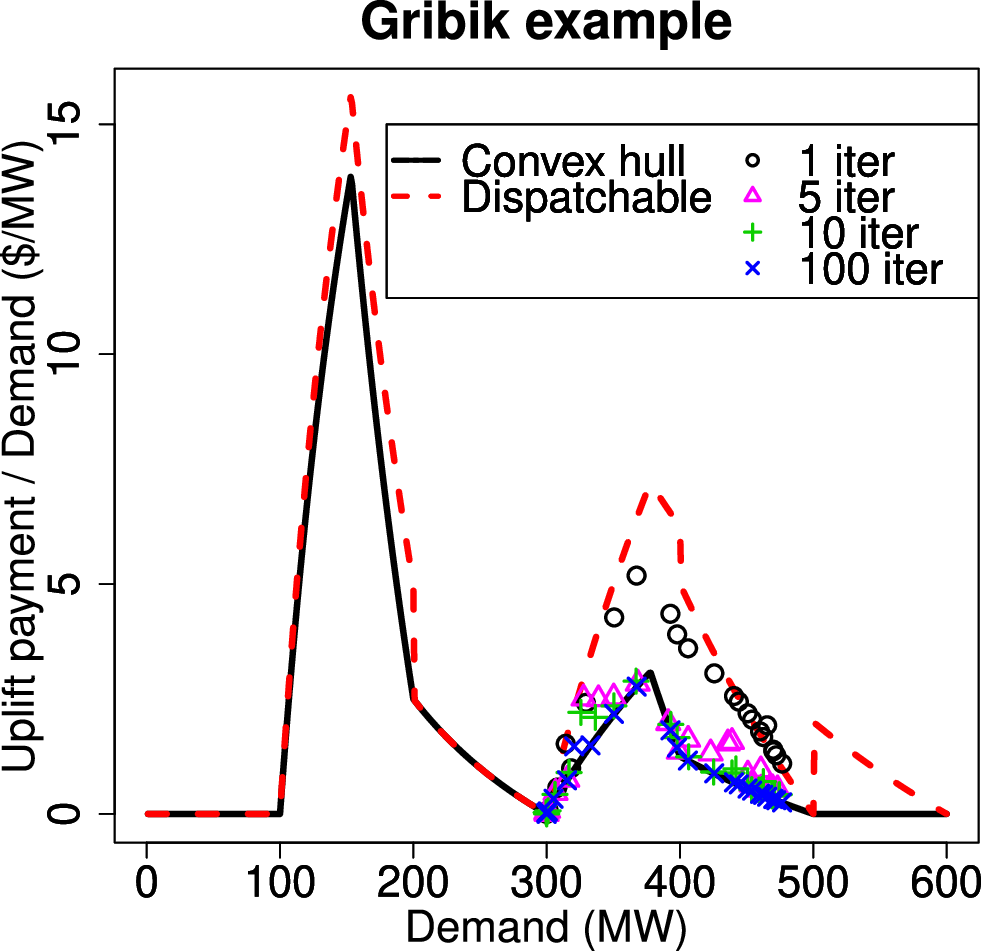}
    \end{minipage}
    \begin{minipage}{.49\columnwidth}
      \centering
      \includegraphics[width=\columnwidth,bb=0 23 475 480,clip]{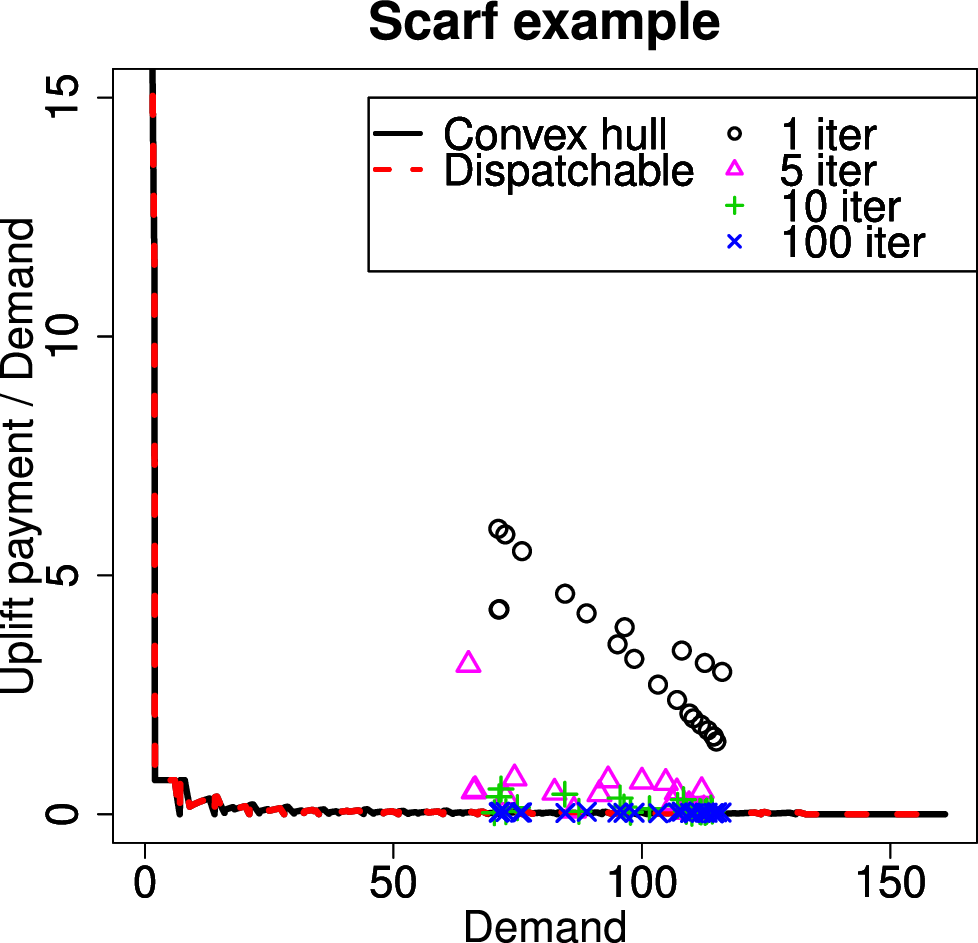}
    \end{minipage}
    \caption{Uplift payments at each iteration.}\label{iter}
  \end{figure}

  Fig. \ref{iter} %\ref{iter_Gribik} and \ref{iter_Scarf} 
  shows the uplift payments on the price after the $k$-th iteration 
  $\lambda^k_t~ (t=1,2,\dots,24,~k=1,5,10,100)$ of our algorithm (i.e., Algorithm \ref{subgradient}). 
  We can see that the prices of our algorithm achieve 
  lower uplift payments than those of the convex cost model 
  (i.e., `LMP' in Fig. \ref{CHP_LMP}) 
  in a few iteration. Furthermore, after about $10$ iterations, 
  the uplift payments reach sufficiently close to the solid line. 
  This implies that the prices $\lambda_t^k$ are almost the same as CHPs. 
  Under the pricing scheme that we assumed, the market participants have to 
  show prices or levels of consumption and production to each other at each iteration. 
  Therefore, it is unrealistic to iterate many times. 
  Our model shows smaller uplift payments than the convex cost model 
  within a realistic number of iterations. 
  
  \subsubsection{Change of the uplift payments with respect to computational time}
  \begin{figure}
    \begin{minipage}{.49\columnwidth}
      \centering
      \includegraphics[width=\columnwidth,bb=0 23 475 480,clip]{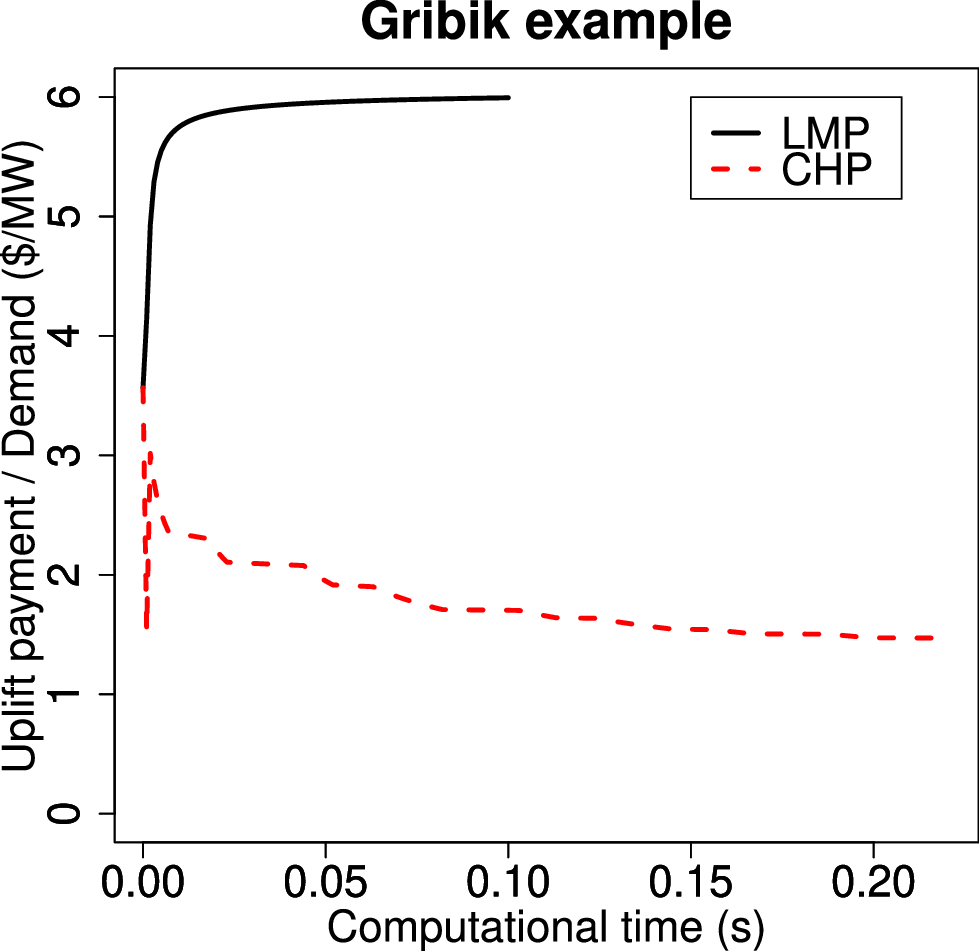}
    \end{minipage}
    \begin{minipage}{.49\columnwidth}
      \centering
      \includegraphics[width=\columnwidth,bb=0 23 475 480,clip]{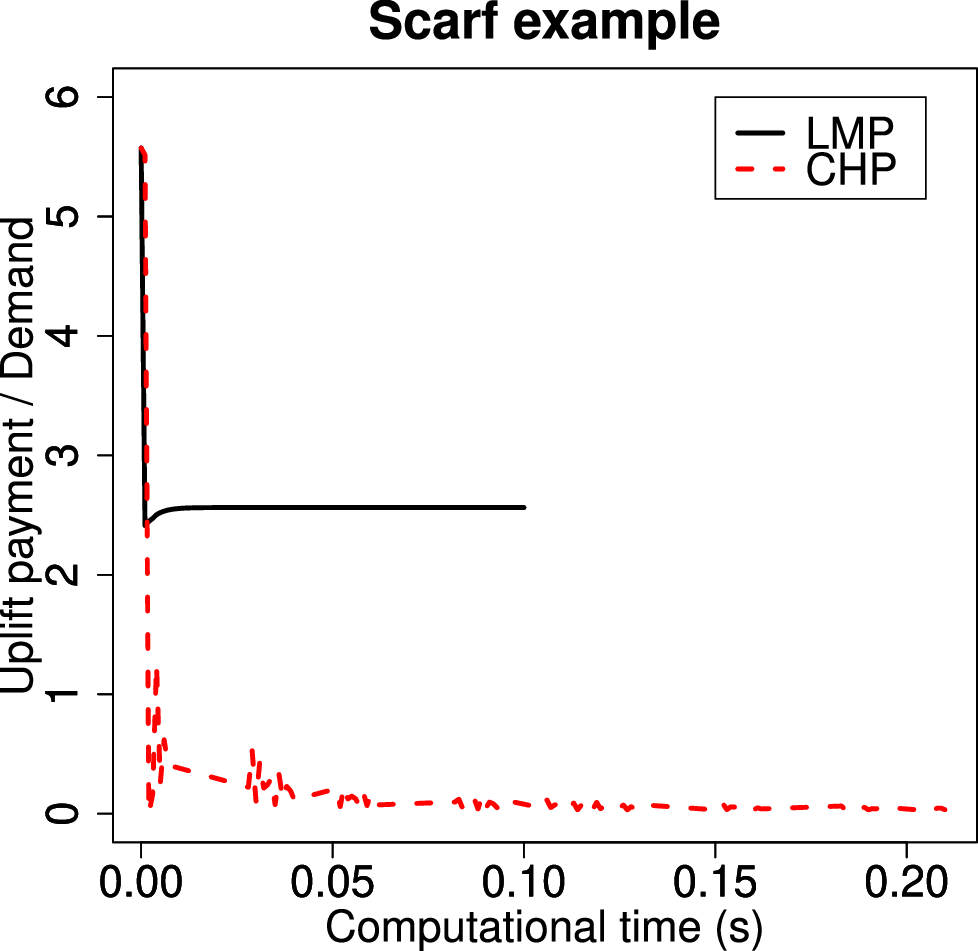}
    \end{minipage}
    \caption{Change of the uplift payments with respect to computational time for 100 iterations.}\label{time}
  \end{figure}
  While we only show the results at 1:00 (i.e., $t=2$) in Fig. \ref{time} 
  %\ref{time_Gribik} and \ref{time_Scarf} 
  for lack of space, remaining time has similar numerical results. 
  On Gribik example, the convex cost model increases the uplift payments with computational time, 
  since the uplift payments for the initial price $\lambda^0_t$ 
  are smaller than ones for the LMPs. 
  By contrast, our model decreases the uplift payments with computational time. 
  On Scarf example, the both algorithms decrease the uplift payments. 
  The algorithm for the convex cost model (i.e., Algorithm 1 in \cite{Miyano2012a}) 
  reduces the uplift payments faster 
  than one for our model (i.e., Algorithm \ref{subgradient} in this paper) at first, 
  since it takes less computational time for an iteration. 
  However, we can see that 
  the algorithm for our model leads less uplift payments than 
  one for the convex cost model a short time later.

\vspace{-1pt}
\section{Conclusion}\label{Conclusion}
This paper provided a new dynamic pricing model based on  
the CHP approach which has not been used in the context of dynamic pricing. 
We first considered a nonconvex cost function within the settings of the UCP, 
and added it to a social welfare maximization problem. 
We proved that a solution of its dual problem (i.e., Lagrange multiplier) gives the CHP. 
This implies that our model minimizes the uplift payment for an equilibrium demand. 
Since our model itself is formulated as a mixed integer programming problem, 
and moreover, the objective function of our model would be nonsmooth, 
it is difficult to solve our model exactly. Therefore, 
we provided an iterative approximation algorithm based on the subgradient method. 
Numerical experiment showed our pricing model led to smaller uplift payment compared 
with existing LMP models with convex cost functions. In addition, our pricing algorithm 
reduced the uplift payments in a few iterations and a little computational time. 

In our numerical experiment, we used examples where generators have 
piece-wise linear cost functions. 
Thus we could use a mixed integer programming solver. 
We are planning to investigate ways to deal general nonlinear 
variable cost functions. We are also planning to extend our model to 
a multi-agent and multi-period one with network constraints as in \cite{Hogan2003}.

%\appendices
%\section{Proof of the First Zonklar Equation}
%Appendix one text goes here.

% Can use something like this to put references on a page
% by themselves when using endfloat and the captionsoff option.
\ifCLASSOPTIONcaptionsoff
  \newpage
\fi

% trigger a \newpage just before the given reference
% number - used to balance the columns on the last page
% adjust value as needed - may need to be readjusted if
% the document is modified later
%\IEEEtriggeratref{8}
% The "triggered" command can be changed if desired:
%\IEEEtriggercmd{\enlargethispage{-5in}}

% references section

% can use a bibliography generated by BibTeX as a .bbl file
% BibTeX documentation can be easily obtained at:
% http://www.ctan.org/tex-archive/biblio/bibtex/contrib/doc/
% The IEEEtran BibTeX style support page is at:
% http://www.michaelshell.org/tex/ieeetran/bibtex/
%\bibliographystyle{IEEEtran}
% argument is your BibTeX string definitions and bibliography database(s)
%\bibliography{IEEEabrv,../bib/paper}
%
% <OR> manually copy in the resultant .bbl file
% set second argument of \begin to the number of references
% (used to reserve space for the reference number labels box)

% Generated by IEEEtran.bst, version: 1.12 (2007/01/11)

\vspace{-0.6cm}
\begin{IEEEbiographynophoto}{Naoki Ito}
received his B.E. degree in Administration Engineering 
from Keio University, Yokohama, Japan, in 2013. He is currently working toward the 
M.E. degree at Keio University, Yokohama, Japan. 
His research interests include optimization methods for 
nonconvex optimization problems. 
\end{IEEEbiographynophoto}
\vspace{-0.8cm}
\begin{IEEEbiographynophoto}{Akiko Takeda}
received the B.E. and M.E. degrees in
Administration Engineering from Keio University, Yokohama,
Japan, in 1996 and 1998, respectively, and the Dr.Sc. degree
in Information Science from Tokyo Institute of Technology,
Tokyo, Japan, in 2001. She is currently an Associate
Professor at the Department of Mathematical Informatics at
the University of Tokyo, Japan. Her research interests
include solution methods for decision making problems under
uncertainty and nonconvex optimization problems, which
appear in financial engineering, machine learning, energy
systems, etc.
\end{IEEEbiographynophoto}
\vspace{-0.8cm}
\begin{IEEEbiographynophoto}{Toru Namerikawa}
received the B.E., M.E and Ph. D of Engineering degrees 
in Electrical and Computer Engineering from Kanazawa University, 
Japan, in 1991, 1993 and 1997, respectively. 
He is currently an Associate Professor at Department of System Design Engineering, 
Keio University, Yokohama, Japan. 
He held visiting positions at Swiss Federal Institute of Technology in Zurich in 1998, 
University of California, Santa Barbara in 2001, 
University of Stuttgart in 2008 and Lund University in 2010. 
His main research interests are robust control, distributed and cooperative control 
and their application to power network systems. 
\end{IEEEbiographynophoto}

\end{document}